\theoremstyle{plain}
\newtheorem{theorem}{Theorem}[section]
\newtheorem{lemma}[theorem]{Lemma}
\newtheorem{prop}[theorem]{Proposition}
\newtheorem{conj}[theorem]{Conjecture}
\newtheorem{theoremA}{Theorem}
\newtheorem{corollaryA}{Corollary}
\theoremstyle{remark}
\newtheorem{remark}[theorem]{Remark}
\newtheorem*{note*}{Note}
\newtheorem*{remark*}{Remark}
\newtheorem*{example*}{Example}
\theoremstyle{definition}
\newtheorem*{definition*}{Definition}
\newtheorem*{hypothesis*}{Hypothesis}
\newtheorem*{assumptions*}{Assumptions}
\newcommand{\Z}{\mathbb{Z}}
\newcommand{\R}{\mathbb{R}}
\newcommand{\Q}{\mathbb{Q}}
\newcommand{\C}{\mathbb{C}}
\newcommand{\Aut}{\mathrm{Aut}}
\newcommand{\Gal}{\mathrm{Gal}}
\newcommand{\cl}{\mathrm{cl}}
\newcommand{\Hom}{\mathrm{Hom}}
\newcommand{\Res}{\mathrm{Res}}
\newcommand{\Irr}{\mathrm{Irr}}
\newcommand{\Ind}{\mathrm{Ind}}
\newcommand{\Spec}{\mathrm{Spec}}
\newcommand{\ram}{\mathrm{ram}}
\numberwithin{equation}{section}
\newcommand{\Fitt}{\mathrm{Fitt}}
\newcommand{\Infl}{\mathrm{Infl}}
\newcommand{\Ext}{\mathrm{Ext}}
\newcommand{\cok}{\mathrm{cok}}
\title[The strong Stark conjecture for totally odd characters]{The strong Stark conjecture\\ for totally odd characters}
\author{Andreas Nickel}
\address{Universit\"{a}t Duisburg--Essen\\
    Fakult\"{a}t f\"{u}r Mathematik\\
    Thea-Leymann-Str. 9\\
    45127 Essen\\
    Germany}
\email{andreas.nickel@uni-due.de}
\urladdr{https://www.uni-due.de/$\sim$hm0251/english.html}
\subjclass[2020]{11R42}
\keywords{strong Stark conjecture; totally odd characters; 
equivariant Tamagawa number conjecture; $L$-functions; Brumer's conjecture}
\date{Version of 10th June 2021}
\begin{document}

\begin{abstract}
We prove the $p$-part of the strong Stark conjecture for every totally
odd character and every odd prime $p$. 

Let $L/K$ be a finite Galois CM-extension
with Galois group $G$, which
has an abelian Sylow $p$-subgroup for an odd prime $p$.
We give an unconditional proof of
the minus $p$-part of the equivariant
Tamagawa number conjecture for the pair $(h^0(\Spec(L)), \Z[G])$
under certain restrictions on the ramification behavior in $L/K$.
\end{abstract}

\maketitle

\section*{Introduction}

Let $K$ be a number field and let $\zeta_K(s)$ be the Dedekind zeta function
attached to $K$. If we denote the leading term of its Taylor expansion at $s=0$
by $\zeta_K^{\ast}(0)$, then the analytic class number formula can be rephrased as
\[
	\zeta_K^{\ast}(0) = - \frac{h_K R_K}{w_K},
\]
where $h_K$, $R_K$ and $w_K$ denote the class number, the regulator and the number
of roots of unity in $K$, respectively. Ignoring the sign, this in turn can be
restated as follows: the ratio 
$R_K / \zeta_K^{\ast}(0)$ is rational and generates the fractional ideal
$(w_K/h_K)$.

Denote the absolute Galois group of $K$ by $G_K$ and let $\chi$ be an 
Artin character of $G_K$, i.e.~a character of $G_K$ with open kernel.
Set $\Q(\chi) := \Q(\chi(g) \mid g \in G_K)$,
which is a finite abelian extension of $\Q$.
Chinburg \cite{MR724009} has formulated the so-called strong Stark conjecture
for $\chi$ as a natural refinement of Tate's form of the original
Stark conjecture \cite{MR782485}. Roughly speaking, the latter conjecture
implies that the ratio of a suitable regulator by
the leading term at $s=0$ of the
Artin $L$-series attached to $\chi$ belongs to
the character field $\Q(\chi)$.
Then the strong Stark
conjecture asserts that the principal ideal generated by this ratio
coincides with a certain `$q$-index' that is defined in purely algebraic terms.
For the trivial character this recovers the analytic class number formula
(up to sign) as described above.

Suppose that Stark's conjecture holds. Then the strong Stark conjecture
naturally decomposes into $p$-parts, where $p$ runs over all 
rational primes. More precisely, we say that the $p$-part of the strong
Stark conjecture holds if each prime in $\Q(\chi)$ above $p$ occurs with the
same multiplicity in the prime ideal factorizations of 
the two fractional ideals involved.

Suppose that $K$ is totally real.
Then $\chi$ is called totally odd if
the fixed field $L_{\chi}$ of the kernel of $\chi$
is a totally complex finite Galois extension of $K$ and complex conjugation
induces a unique automorphism $j$ in the center of $\Gal(L_{\chi}/K)$
such that $\chi(j) = - \chi(1)$.
By a celebrated result of Siegel \cite{MR0285488}, Stark's conjecture holds
for totally odd characters.

The main result of this article is the following.

\begin{theoremA} \label{thm:main-theorem}
	Let $K$ be a totally real number field and let $p$ be an odd prime.
	Let $\chi$ be a totally odd Artin character of $G_K$. Then the
	$p$-part of the strong Stark conjecture holds for $\chi$.
\end{theoremA}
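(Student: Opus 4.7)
My plan is to deduce Theorem~A from the minus $p$-part ETNC result announced in the abstract. It is a standard fact that, for any odd prime $p$ and any finite Galois CM extension $L/K$ with group $G$, the minus $p$-part of the ETNC for $(h^0(\Spec L), \Z[G])$ implies the $p$-part of the strong Stark conjecture for every totally odd character of $G$. Accordingly, to prove Theorem~A it suffices, given a totally odd Artin character $\chi$ of $G_K$, to realise $\chi$ as a character of $\Gal(L/K)$ for some finite CM Galois extension $L/K$ whose Galois group satisfies the abelian Sylow $p$-subgroup hypothesis and whose ramification meets the restrictions of the main ETNC theorem.

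The first step I would take is a functoriality reduction. The $p$-part of the strong Stark conjecture is additive in virtual characters, invariant under inflation, and invariant under induction, since Artin $L$-functions and the associated algebraic $q$-indices behave compatibly under these operations. Combining Brauer's induction theorem with the central action of complex conjugation on the character group, one may express $\chi$ as a $\Z$-linear combination of characters induced from totally odd $1$-dimensional characters $\psi_i$ of $G_{K_i}$, with each $K_i$ a totally real intermediate field of $L_\chi/K$. It therefore suffices to verify the $p$-part of the strong Stark conjecture for each such $\psi_i$.

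For a totally odd $1$-dimensional character $\psi$ of $G_{K'}$ with $K'$ totally real, the field $L_\psi$ is abelian CM over $K'$, so $\Gal(L_\psi/K')$ automatically has abelian Sylow $p$-subgroup. To arrange the ramification restrictions I would enlarge $L_\psi$ to a compositum $L = L_\psi \cdot F$, where $F/K'$ is an auxiliary abelian CM (for instance cyclotomic) extension chosen so that the ramification in $L/K'$ at primes above $p$ fulfils the hypotheses of the main theorem. The resulting $L/K'$ is still abelian and CM, hence still satisfies the first hypothesis; by inflation-invariance, the proof now reduces to verifying strong Stark for $\psi$ regarded as a character of $\Gal(L/K')$. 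Applying the main ETNC result to $L/K'$ and extracting the $\psi$-isotypic component yields the $p$-part of strong Stark for $\psi$, and running the Brauer decomposition backwards gives it for the original $\chi$.

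The main obstacle is the first reduction: one has to ensure that Brauer induction can really be refined, via the centrality of complex conjugation, so as to produce only totally odd $1$-dimensional constituents, each supported on a totally real intermediate field. Once this is set up, the construction of the auxiliary extension $F$ is a concrete finite problem whose shape is dictated by the precise ramification restrictions of the main theorem; one typically handles it by twisting by a suitable tame character or by adjoining a cyclotomic layer to absorb wild inertia at primes above $p$.
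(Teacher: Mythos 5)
Your overall architecture (reduce to totally odd linear characters of abelian CM extensions, then invoke the minus $p$-part of the ETNC) is the same as the paper's, but the step you treat as a routine "concrete finite problem" is in fact where your argument breaks. The ETNC theorem you want to apply requires that every $p$-adic place of the base field be at most tamely ramified in $L$ or satisfy $j \in G_w$, and this hypothesis is inherited \emph{downward}, not upward: if some $p$-adic place $v$ is wildly ramified in $L_\psi/K'$ and $j \notin G_w(L_\psi/K')$, then in any larger CM extension $L \supseteq L_\psi$ the place $v$ is still wildly ramified, and since the restriction of $G_w(L/K')$ to $L_\psi$ lands in $G_w(L_\psi/K')$ while complex conjugation restricts to complex conjugation, one still has $j \notin G_w(L/K')$. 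So no auxiliary cyclotomic or tame twist $F$ can "absorb" the wild inertia or force $j$ into the decomposition group; the compositum trick cannot create the required hypothesis when it already fails for $L_\psi$. The paper sidesteps this entirely by a finer character-theoretic reduction: after averaging over the inertia group of $\Q(\chi)/\Q$ at $p$, it restricts $\chi$ to a Sylow $p$-subgroup $P$ of $G_\chi$, observes that this restriction is a sum of $\Gal(\Q(\psi)/\Q)$-orbits, hence of rational-valued permutation characters for which the strong Stark conjecture is already known, and thereby reduces (for odd $p$) to totally odd characters with $[L_\chi:K]$ prime to $p$. For such characters there is no wild ramification at all above $p$, so the ramification hypothesis of the ETNC theorem is vacuous (and in fact the paper can also run a direct argument using cohomological triviality of all $\Z_p[G]$-modules together with the Dasgupta--Kakde theorem). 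This reduction to prime-to-$p$ degree is the missing idea in your proposal.

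A secondary gap is your induction step. You invoke Brauer induction "refined by the centrality of complex conjugation" to write $\chi$ integrally as a combination of inductions of totally odd linear characters from subgroups containing $j$; Brauer's theorem produces elementary subgroups, which need not contain $j$, and the refinement you need is exactly the point you flag as the main obstacle without proving it. The paper instead applies Artin's induction theorem to the family of subgroups $U \ni j$ with $U/\langle j\rangle$ cyclic (this family covers $G_\chi$), obtaining only $m\chi = \sum_U a_U \Ind_U^{G_\chi}\phi_U$ for some positive integer $m$; this weaker conclusion suffices because the strong Stark conjecture is an equality of fractional ideals, and two fractional ideals coincide if and only if their $m$-th powers do. Parity then forces the $\phi_U$ to be odd, exactly as you intended. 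Replacing your Brauer step by this Artin-induction argument, and adding the prime-to-$p$ reduction described above, would turn your outline into the paper's proof.
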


As an immediate consequence, we obtain the `weak versions'
of non-abelian generalizations of Brumer's conjecture and
the Brumer--Stark conjecture due to the author \cite{MR2976321}.
This also recovers and generalizes the `non-abelian
Stickelberger theorem' of Burns and Johnston \cite{MR2771125}.

\begin{corollaryA} \label{cor:Brumer-Stark}
The weak Brumer and the weak Brumer--Stark conjecture of
\cite{MR2976321} hold outside their $2$-primary parts.
\end{corollaryA}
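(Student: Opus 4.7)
The plan is to deduce Corollary A from Theorem A via the dictionary between the strong Stark conjecture and equivariant annihilation statements for class groups, as set up in \cite{MR2976321}. Both the weak Brumer and the weak Brumer--Stark conjecture decompose into $\ell$-primary parts over the rational primes $\ell$; since the corollary excludes the $2$-primary parts it suffices to establish, for every odd prime $p$ and every finite Galois CM-extension $L/K$ with $G = \Gal(L/K)$, the $p$-part of each conjecture.

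First I would recall the non-abelian Stickelberger element $\theta^S_{L/K} \in \zeta(\Q[G])$ attached to an admissible finite set $S$ of places of $K$. By construction $\chi(\theta^S_{L/K}) = L_S(\chi^{\vee},0)$ for every irreducible complex character $\chi$ of $G$, and $\theta^S_{L/K}$ vanishes on the plus part; thus its content is concentrated in $\zeta(\Q[G]^-)$ and is controlled by the leading values at $s=0$ of $L_S(\chi,\cdot)$ for totally odd $\chi$. The right-hand side of Chinburg's strong Stark conjecture for such a $\chi$ is a $q$-index defined in terms of the cohomology of $L$, and Theorem A asserts the equality of the $p$-primary parts of the corresponding fractional ideals of $\Q(\chi)$.

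Next I would reassemble these character-wise equalities into an equality of fractional ideals in $\zeta(\Q_p[G]^-)$ via the Wedderburn decomposition: each simple factor of $\Q_p[G]^-$ is indexed by a $\Gal(\overline{\Q}/\Q)$-orbit of totally odd characters of $G$, and the reduced-norm image of a fractional ideal in that factor is precisely the ideal of $\Q(\chi)$ appearing in Chinburg's formulation. Via the framework of \cite{MR2976321} such an equality translates directly into annihilation of the minus $p$-part of the ideal class group of $L$, and, for the Brumer--Stark variant, into the existence of the predicted Brumer--Stark element at $p$; in both cases up to the factor $|G|$ that distinguishes the weak from the strong formulations.

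I expect the main obstacle to be exactly this last bookkeeping step: character-by-character equalities in the abelian extensions $\Q(\chi)/\Q$ do not by themselves glue to a $\Z_p[G]$-integral statement because of the non-commutativity of $\zeta(\Q_p[G]^-)$-ideals versus $\Z_p[G]$-modules. The weak formulations of \cite{MR2976321} are designed precisely to absorb this non-commutative defect through the factor $|G|$, so that once the dictionary between reduced-norm-level identities and module-level annihilation is in place (a result of \cite{MR2976321}, which in the abelian case recovers \cite{MR2771125}), Corollary A is an immediate consequence of Theorem A.
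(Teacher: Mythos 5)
Your proposal is correct and follows essentially the same route as the paper: the paper deduces Corollary \ref{cor:Brumer-Stark} directly from Theorem \ref{thm:main-theorem} by invoking the dictionary established in \cite{MR2976321} (specifically Theorem 4.1, Proposition 3.9 and Lemma 2.12 there), which is exactly the character-to-module translation, with the weak formulations absorbing the non-commutative defect, that you describe.
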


For results on the $2$-primary parts of these conjectures we refer the reader
to work of Nomura \cite{MR3408193, MR3208861}.

Now let $L/K$ be a finite Galois extension of number fields
with Galois group $G$. We regard $h^0(\Spec(L))$ as a motive defined
over $K$ and with coefficients in the semisimple algebra $\Q[G]$.
Let $\mathfrak A$ be a $\Z$-order in $\Q[G]$ that contains
the integral group ring $\Z[G]$.
The equivariant Tamagawa number conjecture (ETNC)
for the pair $(h^0(\Spec(L)), \mathfrak A)$ has been formulated
by Burns and Flach \cite{MR1884523} and asserts that a certain
canonical element $T\Omega(L/K, \mathfrak A)$ of the relative
algebraic $K$-group $K_0(\mathfrak A, \R)$ vanishes.
If $\mathfrak A = \mathfrak M(G)$ is a maximal order, then
Burns and Flach \cite[\S 3]{MR1981031} have shown that the ETNC for the
pair $(h^0(\Spec(L)), \mathfrak M(G))$ is equivalent to the strong Stark
conjecture for all irreducible characters of $G_K$ such that
$L_{\chi}$ is contained in $L$.

If $v$ is a place of $K$, we choose a place $w$ of $L$ above $v$ and
write $G_w$ and $I_w$ for the decomposition group and the inertia
subgroup at $w$, respectively. 
If $L/K$ is a CM-extension we let $j \in G$ denote complex conjugation.
As we will see, one may deduce Theorem \ref{thm:main-theorem} from
(the abelian case of) the
following result by adjusting a reduction argument of 
Ritter and Weiss \cite{MR1423032}.

\begin{theoremA} \label{thm:ETNC-holds}
	Let $p$ be an odd prime.
	Let $L/K$ be a Galois CM-extension with Galois group $G$,
	which has an abelian Sylow $p$-subgroup.
	Suppose that every $p$-adic place $v$ of $K$ is at most tamely ramified in $L$
	or that $j \in G_w$. Then the $p$-minus part of the
	ETNC for the pair $(h^0(\Spec(L)), \Z[G])$ holds.
\end{theoremA}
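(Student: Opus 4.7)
The plan is to prove Theorem B by lifting the $p$-minus part of the ETNC to an equivariant statement over the cyclotomic $\Z_p$-extension, applying the main conjecture for totally real fields, and then descending to finite level, with the ramification hypotheses on $p$-adic places serving exactly to kill the local Euler factors. The role of the hypothesis that the Sylow $p$-subgroup $P$ of $G$ is abelian is structural: since $p$ is odd, $j \notin P$, so the pro-$p$ part of $\mathcal G := \Gal(L_\infty/K)$ (with $L_\infty := L\cdot K_\infty^{\mathrm{cyc}}$) is isomorphic to $P \times \Gal(L_\infty/L)$ and is abelian. Consequently $\Z_p\llbracket \mathcal G \rrbracket^-$ decomposes into a finite product of commutative regular local rings, so that only the commutative case of the main conjecture will be needed.

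At infinite level I would set up two ingredients. On the analytic side, the Deligne--Ribet and Cassou-Nogu\`es construction gives a canonical unit $\xi^- \in (\Z_p\llbracket \mathcal G \rrbracket^-)^\times$ interpolating the values $L_S(\chi,0)$ for totally odd characters $\chi$. On the algebraic side, the relevant Iwasawa module $X_\infty^-$ is (a slight modification of) the Galois group over $L_\infty$ of the maximal abelian $p$-extension unramified outside a suitable finite set $S$. The abelian equivariant main conjecture --- which for odd $p$ is a consequence of Wiles's proof of the non-equivariant main conjecture together with classical descent --- identifies $\xi^-$ with the characteristic element of $X_\infty^-$ in each commutative block, yielding the infinite-level form of the ETNC for the pair $(h^0(\Spec L_\infty)_-, \Z_p\llbracket \mathcal G \rrbracket^-)$.

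The last step is descent from $\Z_p\llbracket \mathcal G \rrbracket$ to $\Z_p[G]$ via the cone of multiplication by $\gamma - 1$ for a topological generator $\gamma$ of $\Gal(L_\infty/L)$. This translates the infinite-level identity into the $p$-minus part of the finite-level ETNC \emph{plus} a correction concentrated at the $p$-adic places of $K$, encoding the Euler factors $\det(1 - \mathrm{Fr}_w \mid V_\chi^{I_w})$ that are absent from the $p$-adic $L$-function but present in the complex $L$-value, together with the Bockstein contributions coming from semi-local cohomology at places above $p$. The step I expect to be the main obstacle is verifying that this correction vanishes in $K_0(\Z_p[G], \Q_p)^-$ under the hypotheses of the theorem: if $v$ is at most tamely ramified, then $|I_w|$ is prime to $p$ and the local Euler factor becomes a unit in $\Z_p[G_w/I_w]^-$ whose image in the relative $K_0$ is trivial; if $j \in G_w$, then the minus part of every $G_w$-module built from the cohomology at $w$ is zero, since $j$ acts as $-1$ on the minus part while lying in the decomposition group. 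In either case the local contribution dies on the minus part, and the identity descends cleanly, giving the desired vanishing of the $p$-minus component of $T\Omega(L/K,\Z[G])$. The transfer from this formulation back to the precise statement of Theorem B is then handled by the Ritter--Weiss reduction \cite{MR1423032} alluded to in the introduction.
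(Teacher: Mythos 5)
Your route runs through the equivariant Iwasawa main conjecture plus descent, and this is exactly the approach the paper is designed to avoid, because it does not go through unconditionally. Two steps are genuinely broken. First, the infinite-level input: the equivariant main conjecture over $\Z_p\llbracket\mathcal G\rrbracket^-$ is \emph{not} ``a consequence of Wiles's non-equivariant main conjecture together with classical descent''. Passing from the character-by-character statement to the equivariant one requires control of the module structure of $X_\infty^-$, and all known proofs of the equivariant statement in this generality (Ritter--Weiss, Kakde) assume the vanishing of the Iwasawa $\mu_p$-invariant, which is conjectural; the earlier results of the author and of Burns cited in the introduction are conditional for precisely this reason. Second, the descent: your claim that tame ramification at $v\mid p$ makes the local Euler factor ``a unit in $\Z_p[G_w/I_w]^-$'' is false. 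If $v$ is unramified (hence tame) with $j\notin G_w$ and $\chi$ is odd with $\chi(\phi_w)=1$ or a nontrivial $p$-power root of unity, the factor $1-\chi(\phi_w)$ is zero or a non-unit; these are the trivial zeros, and eliminating the resulting correction term is exactly where Burns's strategy has to invoke Gross's conjecture. The hypotheses of the theorem do not exclude such places, so your ``main obstacle'' step does not close. A further structural slip: for nonabelian $G$ with abelian Sylow $p$-subgroup, $\Z_p\llbracket\mathcal G\rrbracket^-$ is not a product of commutative rings (already $\Z_p[S_3]$ has a matrix block), so you cannot reduce to ``the commutative case of the main conjecture'' by that remark alone.

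The paper's proof is quite different and uses no Iwasawa main conjecture, no $\mu_p=0$, and no Gross-type regulator. It first reduces the general case to abelian $L/K$ by a $p$-elementary induction argument (via \cite[Proposition 6.2]{MR3552493}, using that a $p$-elementary group with abelian Sylow $p$-subgroup is abelian), which replaces your faulty structural claim. For abelian $L/K$ it invokes the author's reformulation (Theorem \ref{thm:ETNC-reformulation}): under the stated ramification hypotheses the $p$-minus ETNC is equivalent to $(\theta_{L/K,S_1}^T)^{\sharp}\in\Fitt_{\Z_p[G]_-}(A_{L,S_\infty}^T(p)^{\vee})$. This is then verified unconditionally from the Dasgupta--Kakde theorem on the strong Brumer--Stark conjecture (equivalently their Fitting ideal computation for the Selmer module), together with an elementary manipulation of Euler factors showing $(\theta_{L/K,S_1}^T)^{\sharp}$ lies in the Sinnott--Kurihara ideal. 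That input from \cite{Dasgupta-Kakde} is entirely absent from your proposal, and without it (or with only Wiles plus descent) the argument cannot reach the unconditional statement of the theorem.
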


The main ingredients of the proof are a recent result of 
Dasgupta and Kakde \cite{Dasgupta-Kakde} 
on the strong Brumer--Stark conjecture
and a reformulation of the minus part of the ETNC 
due to the author \cite{MR2805422}. Since the latter result
is rather involved and not required if one is only interested in
the strong Stark conjecture, we also give a more direct proof
of Theorem \ref{thm:main-theorem} that almost only relies on \cite{Dasgupta-Kakde}.

By methods of Iwasawa theory, similar results have already been proven by the 
author under a suitable `$\mu_p=0$' condition (\cite[Theorem 5.1]{MR2545262},
\cite[Theorem 4]{MR2805422}
and for not necessarily abelian extensions in \cite[Theorem 1.3]{MR3552493}). Still assuming
that Iwasawa's $\mu_p$-invariant vanishes, Burns \cite{MR4092926}
presented a strategy for verifying the $p$-minus part of the ETNC
in general. 
This approach relies, in addition, on a conjecture of Gross
\cite{MR656068}.
The decisive advantage of our Theorem \ref{thm:ETNC-holds}
is that it does not depend on any conjectural vanishing of $\mu_p$-invariants
or any further conjectures.

If $L$ is abelian over the rationals, the whole ETNC for the pair
$(h^0(\Spec(L)), \Z[G])$ is known by work of Burns and Greither \cite{MR1992015}
and of Flach \cite{MR2863902}. These results also rely on Iwasawa theory
and use that the relevant Iwasawa $\mu_p$-invariants vanish
by a theorem of Ferrero and Washington \cite{MR528968}.

Typical examples of wildly ramified extensions $L/K$ where Theorem
\ref{thm:ETNC-holds} applies may be constructed as follows.
Let $F/K$ be an arbitrary abelian Galois extension of totally real fields
and suppose that the degree $[F:\Q]$ is odd. Then for each non-trivial
$p$-power root of unity $\zeta$ the field $L := F(\zeta)$ is CM
and abelian over $K$. Moreover, we have $j \in G_w$ (actually $j \in I_w$) for 
all $p$-adic places $v$ of $L$ and each such place is wildly ramified
whenever the order of $\zeta$ is sufficiently large.\\

Finally, we record the following consequences of Theorem \ref{thm:ETNC-holds}.

\begin{corollaryA} \label{cor:consequences}
	Let $p$ be an odd prime.
	Let $L/K$ be a Galois CM-extension with Galois group $G$,
	which has an abelian Sylow $p$-subgroup.
	Suppose that every $p$-adic place $v$ of $K$ is at most tamely ramified in $L$
	or that $j \in G_w$.  Then the $p$-parts
	of the following conjectures hold:
	\begin{enumerate}
		\item
		the (non-abelian) Brumer conjecture \cite[Conjecture 2.1]{MR2976321};
		\item 
		the (non-abelian) Brumer--Stark conjecture 
		\cite[Conjecture 2.6]{MR2976321};
		\item
		the minus part of the central conjecture
		(Conjecture 2.4.1) of Burns \cite{MR1863302};
		\item 
		the minus part of the `lifted root number conjecture'
		of Gruenberg, Ritter and Weiss \cite{MR1687551}.
	\end{enumerate}
	If $L/K$ is at most tamely ramified at all places, then
	the $p$-minus parts of both the central conjecture
	(Conjecture 3.3)
	of Breuning and Burns \cite{MR2371375} and of the ETNC for the pair
	$(h^0(\Spec(L))(1), \Z[G])$ hold.
\end{corollaryA}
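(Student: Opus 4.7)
The plan is to obtain each assertion as a formal consequence of Theorem \ref{thm:ETNC-holds} together with reduction results already present in the literature. No new $L$-value computation is required; the work consists of matching hypotheses and invoking the relevant equivalences between formulations of the ETNC and the listed conjectures.

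For parts (iii) and (iv), I would argue that the minus part of Burns's central conjecture \cite{MR1863302} and the minus part of the lifted root number conjecture of Gruenberg--Ritter--Weiss \cite{MR1687551} are each equivalent to the $p$-minus part of the ETNC for the pair $(h^0(\Spec(L)), \Z[G])$. These equivalences have been established in work of Burns and Burns--Flach. Given them, the conclusion is immediate from Theorem \ref{thm:ETNC-holds}.

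For parts (i) and (ii), the key input is the author's earlier paper \cite{MR2976321}, in which the $p$-parts of the non-abelian Brumer and Brumer--Stark conjectures are shown to follow from the $p$-minus part of the ETNC for $(h^0(\Spec(L)), \Z[G])$. Concretely, the relevant Stickelberger-type elements are identified with a $K$-theoretic invariant whose triviality is controlled by $T\Omega(L/K, \Z[G])^-_p$; since this element vanishes by Theorem \ref{thm:ETNC-holds}, the corresponding annihilation statements for ray class groups follow.

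For the final sentence, assume in addition that $L/K$ is at most tamely ramified at all places. The local epsilon constant conjecture is then known (by work of Bley--Burns and Breuning), and combining this input with the equivariant functional equation translates the $p$-minus part of the ETNC for $(h^0(\Spec(L)), \Z[G])$, granted by Theorem \ref{thm:ETNC-holds}, into the corresponding $p$-minus part of the ETNC for the Tate dual $(h^0(\Spec(L))(1), \Z[G])$. The Breuning--Burns central conjecture \cite{MR2371375} is, in its minus part, a further reformulation of this dual ETNC. The principal technical obstacle throughout is to verify that the $p$-local, minus-idempotent versions of the various reduction results apply under the (rather weak) ramification hypotheses of Theorem \ref{thm:ETNC-holds}, rather than under the stronger tameness or unramifiedness assumptions sometimes imposed in the original references; once this bookkeeping is carried out, each of (i)--(iv) and the tame supplement follow by direct citation.
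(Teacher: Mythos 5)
Your proposal is correct and follows essentially the same route as the paper: parts (i)–(ii) via the reduction in \cite[Theorem 5.3]{MR2976321}, parts (iii)–(iv) via Burns's equivalences between the vanishing of $T\Omega(L/K,0)$, his central conjecture and the lifted root number conjecture, and the tame supplement via the known (tame) epsilon constant results of Bley--Burns used to pass from the ETNC at $s=0$ to the pair $(h^0(\Spec(L))(1), \Z[G])$ and the Breuning--Burns conjecture. The paper's proof is exactly this bookkeeping, citing \cite[Corollary 6.3(i)]{MR2005875} as in \cite[Corollary 1.6]{MR3552493} for the last step.
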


\subsection*{Acknowledgements}
The author is indebted to Henri Johnston for many useful comments
and for drawing his attention 
to the slides of a talk by Samit Dasgupta from the International Colloquium
on Arithmetic Geometry at Tata Institute of Fundamental Research in Mumbai.
This was the starting point of this project. I am also very grateful to
Samit Dasgupta and Mahesh Kakde for 
helpful correspondence regarding their preprint \cite{Dasgupta-Kakde}.

The author acknowledges financial support provided by the 
Deutsche Forschungsgemeinschaft (DFG) 
within the Heisenberg programme (project number 437113953).

\subsection*{Notation and conventions}
All rings are assumed to have an identity element and all modules are assumed
to be left modules unless otherwise stated. 
Unadorned tensor products will always denote
tensor products over $\Z$.
For every field $F$ we fix a separable closure $F^c$ of $F$ and write
$G_F := \Gal(F^c/F)$ for its absolute Galois group. 

For a finite group $G$ and a prime $p$ we let $\Irr(G)$ and $\Irr_p(G)$ 
be the set of irreducible complex-valued and $\Q_p^c$-valued characters of $G$, respectively.
If $U$ is a subgroup of $G$ and $\chi$ is a character of $U$, we write
$\Ind_U^G \chi$ for the induced character. If $U$ is normal and $\chi$ is 
a character of the quotient $G/U$, then we denote the inflated character
of $G$ by $\Infl^G_{G/U} \chi$.

If $M$ is a $G$-module, we denote the maximal submodule of $M$ upon which $G$ acts trivially by $M^G$.
Similarly, we let $M_G$ denote the maximal quotient of $M$ with trivial $G$-action.
Then $N_G := \sum_{g \in G} g$ induces a map $M_G \rightarrow M^G$ which we also denote by
$N_G$.

For a ring $R$ and a positive
integer $n$, we write $M_n(R)$ for the ring of $n \times n$ matrices with entries in $R$.

\section{The conjectures}

\subsection{General notation}
Let $L/K$ be a finite Galois extension of number fields with Galois group $G$.
For each place $v$ of $K$ we choose a place $w$ of $L$ above $v$ and denote 
the decomposition group and inertia subgroup of $L/K$ at $w$ by $G_w$ and $I_w$,
respectively. When $G$ is abelian, then both $G_w$ and $I_w$ only depend upon $v$,
and we will occasionally also use the notation $G_v$ and $I_v$ in this case.
We let $\phi_w \in G_w/I_w$ be the Frobenius automorphism and denote the cardinality
of the residue field $K(v)$ at $v$ by $N(v)$. For a finite place $w$ of $L$ we write
$\mathfrak P_w$ for the associated prime ideal in $L$.

For a set $S$ of places of $K$ we let $S(L)$ be the set comprising those places of $L$
that lie above a place in $S$. For each prime $p$ we denote the set of $p$-adic places of $K$ by $S_p$
and the set of archimedean places of $K$ by $S_{\infty}$.
When $S$ is finite, we let $Y_{L,S}$ be the free abelian group generated by the places in $S(L)$
and $X_{L,S}$ be the kernel of the augmentation map $Y_{L,S} \rightarrow \Z$ which maps each place
$w \in S(L)$ to $1$. If in addition $S$ contains all archimedean places, then 
$\mathcal{O}_{L,S}$ denotes the ring of $S(L)$-integers in $L$. 
As usual, we abbreviate $\mathcal{O}_{L,S_{\infty}}$ to $\mathcal{O}_{L}$.

\subsection{Artin $L$-series and Stark's conjecture}
Let $S$ be a finite set of places of $K$ containing $S_{\infty}$. For each complex-valued
character $\chi$ of $G$ we denote the $S$-truncated Artin $L$-series attached to $\chi$
by $L_S(s,\chi)$ and the leading coefficient of its Taylor expansion at $s=0$ by
$L_S^{\ast}(0,\chi)$. We choose a $\C[G]$-module $V_{\chi}$ with character $\chi$ and
denote the contragredient of $\chi$ by $\check \chi$.
The negative of the usual
Dirichlet map induces an isomorphism of $\R[G]$-modules
\begin{equation} \label{eqn:Dirichlet-map}
 \lambda_S: \R \otimes \mathcal{O}_{L,S}^{\times} \stackrel{\simeq}{\longrightarrow}
 \R \otimes X_{L,S}, \quad 1 \otimes \epsilon \mapsto - \sum_{w \in S(L)} \log |\epsilon|_w w.
\end{equation}
By the Noether--Deuring theorem (see \cite[Lemma 8.7.1]{MR2392026} for instance) 
there exist (non-canonical) $\Q[G]$-isomorphisms
\begin{equation} \label{eqn:phi_S}
  \phi_S: \Q \otimes X_{L,S} \stackrel{\simeq}{\longrightarrow} \Q \otimes \mathcal{O}_{L,S}^{\times}.
\end{equation}
Each choice of $\phi_S$ induces a $\C$-linear automorphism
\begin{eqnarray*}
 (\lambda_S \circ \phi_S)^{\chi}: \Hom_{\C[G]}(V_{\check \chi}, \C \otimes X_{L,S}) & \longrightarrow &
 \Hom_{\C[G]}(V_{\check \chi}, \C \otimes X_{L,S})\\
 f & \mapsto & \lambda_S \circ \phi_S \circ f.
\end{eqnarray*}
We denote its determinant by $R_S(\chi, \phi_S)$ and set
\[
 A_S(\chi, \phi_S) := \frac{R_S(\chi, \phi_S)}{L_S^{\ast}(0,\chi)} \in \C^{\times}.
\]

\begin{conj}[Stark] \label{conj:Stark}
 For each $\sigma \in \Aut(\C)$ one has 
 $A_S(\chi, \phi_S)^{\sigma} = A_S(\chi^{\sigma}, \phi_S)$.
\end{conj}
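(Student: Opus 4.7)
The plan is to reduce Conjecture \ref{conj:Stark} to the case of one-dimensional characters by exploiting the functorial properties of $A_S$, and then to analyze that case through class field theory. First, I would verify that the statement is independent of the particular choice of $\phi_S$ satisfying \eqref{eqn:phi_S}: if $\phi_S'$ is another such $\Q[G]$-isomorphism, then $\phi_S \circ (\phi_S')^{-1}$ is a $\Q[G]$-automorphism of $\Q \otimes X_{L,S}$, whose determinant on the $\check\chi$-isotypic component lies in $\Q(\chi)^\times$ and transforms under $\sigma \in \Aut(\C)$ by sending $\chi$ to $\chi^\sigma$, so both sides of the conjectural identity change by matching factors. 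Next I would establish the basic compatibilities of $A_S(\chi,\phi_S)$ with operations on $\chi$: multiplicativity $A_S(\chi_1+\chi_2, \phi_S) = A_S(\chi_1,\phi_S) A_S(\chi_2,\phi_S)$ follows from multiplicativity of $L_S^{\ast}$ and of the determinant, while induction and inflation behavior follow from the Artin formalism for Artin $L$-series on the analytic side together with Frobenius reciprocity on the regulator side (after adjusting $\phi_S$ along restriction of scalars).

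With these preparations in hand, Brauer's induction theorem writes $\chi$ as a $\Z$-linear combination of characters of the form $\Ind_U^G \psi$ with $\psi$ one-dimensional on some subgroup $U \le G$, so the conjecture reduces to verifying Galois equivariance of $A_S(\psi, \phi_S)$ for a one-dimensional character $\psi$ of an abelian Galois group. In this case class field theory identifies $L_S(s,\psi)$ with the Hecke $L$-series of an idele class character of the relevant base field, and its leading Taylor coefficient $L_S^{\ast}(0,\psi)$ admits a classical description in terms of logarithms of $S$-units in $L_\psi$ via the analytic class number formula and its extensions to non-trivial characters. Comparing this description with the explicit form of $R_S(\psi,\phi_S)$ on the $\check\psi$-isotypic component of $\Q \otimes \mathcal{O}_{L_\psi,S}^{\times}$ realizes $A_S(\psi,\phi_S)$ as a transcendental ratio of a regulator determinant and an analytic leading term.

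The main obstacle is establishing Galois equivariance in precisely this one-dimensional case. Concretely, one must show that the transcendence of logarithms of $S$-units appearing in $R_S(\psi,\phi_S)$ cancels against the transcendence of $L_S^{\ast}(0,\psi)$ in a $\sigma$-equivariant manner, leaving behind an element of $\Q(\psi)^{\times}$ on which $\sigma$ acts via $\psi \mapsto \psi^{\sigma}$. The natural route is to exhibit a $\Q(\psi)$-structure on $\Hom_{\C[G]}(V_{\check\psi}, \C \otimes X_{L,S})$ that is preserved, up to a factor in $\Q(\psi)^{\times}$, by the operator $f \mapsto \lambda_S \circ \phi_S \circ f$; equivalently, one must produce $S$-units in $L_\psi$ whose $\psi$-component regulator matches $L_S^{\ast}(0,\psi)$ modulo $\Q(\psi)^{\times}$. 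For the totally odd characters that govern Theorem \ref{thm:main-theorem}, Siegel's rationality theorem supplies precisely such a match; the same framework reduces the conjecture in general to an analogous rationality input, which is the genuinely deep ingredient at the heart of the statement.
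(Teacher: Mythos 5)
The statement you are asked to prove is a conjecture, not a theorem: the paper offers no proof of Conjecture \ref{conj:Stark} in general, and none is known. The paper only records its stability under change of $S$ and $\phi_S$, addition, inflation and induction (with reference to Tate's book), and then invokes the known cases: Siegel's theorem for totally odd characters (Theorem \ref{thm:Siegel}), rational-valued characters, and characters whose fixed field is abelian over $\Q$. So the question is not whether your route differs from the paper's, but whether it actually closes the problem — and it does not.

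The reductions you describe (independence of $\phi_S$, additivity, inflation and induction compatibility, Brauer induction down to degree-one characters $\psi$ of subgroups $U \le G$) are correct and standard; they are exactly the properties the paper cites from \cite{MR782485}. The gap is the base case. For a degree-one character $\psi$ of $U$, class field theory identifies $L_S(s,\psi)$ with a Hecke $L$-series over the fixed field $K' = L^U$, but there is no ``extension of the analytic class number formula to non-trivial characters'' expressing $L_S^{\ast}(0,\psi)$ in terms of logarithms of $S$-units up to a factor in $\Q(\psi)^{\times}$: such an expression is precisely the content of Stark's conjecture for $\psi$, which is open for a general base field $K'$ (it is known when $K'=\Q$, for rational-valued characters, and for totally odd characters). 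Your final paragraph concedes as much — the ``rationality input'' you require in the one-dimensional case is the conjecture itself, so the argument is circular at the decisive step. Note also that Siegel's theorem does not provide the regulator/leading-term cancellation you invoke: for a totally odd $\psi$ the order of vanishing at $s=0$ is zero, the regulator factor is trivial, and Siegel gives rationality of the value $L_S(0,\psi)$ itself; that is how the paper uses it, and it says nothing about the genuinely transcendental situation when the order of vanishing is positive.
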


Here we let $\sigma \in \Aut(\C)$ act on the left even though we write exponents on the right.
Note that Conjecture \ref{conj:Stark} in particular predicts that
$A_S(\chi, \phi_S)$ lies in the character field 
$\Q(\chi) := \Q(\chi(g) \mid g \in G)$.

\begin{remark}
It is not hard to see that Conjecture \ref{conj:Stark} does not depend on the 
choices of $S$ and $\phi_S$ (see \cite[Chapitre I, Propostion 7.3 and \S 6.2]{MR782485}). 
The truth of the conjecture is invariant under inflation and induction 
and respects addition of characters \cite[Chapitre I, \S 7.1]{MR782485}.
In particular, if we view $\chi$ as an Artin character of $G_K$,
then the conjecture does not depend on the choice of a 
finite extension $L$ that contains the fixed field
$L_{\chi}$ of $\chi$. Moreover, Conjecture \ref{conj:Stark} holds for all characters of $G$
if and only if it holds for all irreducible characters of $G$.
The same observations apply to the strong Stark conjecture 
(Conjecture \ref{conj:Strong-Stark} below).
\end{remark}

The following deep result is due to Siegel \cite{MR0285488}.

\begin{theorem}[Siegel] \label{thm:Siegel}
 Conjecture \ref{conj:Stark} holds for each totally odd character $\chi$.
\end{theorem}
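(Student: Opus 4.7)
The plan is to deduce Conjecture \ref{conj:Stark} for a totally odd $\chi$ from the classical theorem of Klingen--Siegel on the rationality of values of partial Dedekind zeta functions of totally real fields at $s=0$, which is the main content of \cite{MR0285488}.

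First, I would trivialise the regulator contribution. By the $S$-independence recorded in the preceding remark, take $S = S_{\infty}$ and set $G := \Gal(L_{\chi}/K)$. The totally-odd hypothesis says that $j \in Z(G)$ and $\chi(j) = -\chi(1)$; since $K$ is totally real, every $v \in S$ is real and each associated $G_w$ equals $\{1,j\}$. Hence $V_{\chi}^{G_w} = 0$ for all $v \in S$. Using the splitting $\C \otimes Y_{L_{\chi},S} \cong (\C \otimes X_{L_{\chi},S}) \oplus \C$ of $\C[G]$-modules, we obtain the standard rank formula
\[
\dim_{\C} \Hom_{\C[G]}(V_{\check\chi}, \C \otimes X_{L_{\chi},S}) = \sum_{v \in S} \dim_{\C} V_{\chi}^{G_w} - \dim_{\C} V_{\chi}^{G}.
\]
The trivial character is not totally odd, so we may assume $\chi$ has no trivial constituent, giving $V_{\chi}^{G}=0$ and making the displayed $\Hom$-space vanish. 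Thus $R_S(\chi,\phi_S)$ is the empty determinant $1$, and $A_{S_{\infty}}(\chi,\phi_S) = L_{S_{\infty}}(0,\chi)^{-1}$. Conjecture \ref{conj:Stark} therefore collapses to the assertion that $L_{S_{\infty}}(0,\chi) \in \Q(\chi)^{\times}$ and that $L_{S_{\infty}}(0,\chi)^{\sigma} = L_{S_{\infty}}(0,\chi^{\sigma})$ for every $\sigma \in \Aut(\C)$.

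Second, I would invoke Brauer's induction theorem to write $\chi = \sum_i n_i \Ind_{H_i}^G \psi_i$ as a $\Z$-linear combination of characters induced from $1$-dimensional characters $\psi_i$ of subgroups $H_i \leq G$. By the invariance of Conjecture \ref{conj:Stark} under induction, inflation and additivity (recorded in the remark), it suffices to establish the conjecture for each $\psi_i$ regarded as an Artin character of $G_{F_i}$, where $F_i := L_{\chi}^{H_i}$. Each $\psi_i$ is a finite-order character, hence corresponds to a finite-order Hecke character of $F_i$; its value $L(0,\psi_i)$ is, up to harmless elementary factors, a $\Q$-linear combination of values at $s=0$ of partial Dedekind zeta functions of the maximal totally real subfield $F_i^{+}$ of $F_i$. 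Siegel's theorem of \cite{MR0285488} then supplies the rationality and Galois equivariance of these partial zeta values, and hence of each $L(0,\psi_i)$.

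The principal obstacle is in the Brauer step: one must organise the decomposition $\chi = \sum_i n_i \Ind_{H_i}^G \psi_i$ so that the totally-odd structure is preserved through induction, and so that each $\psi_i$ that actually contributes feeds cleanly into Siegel's theorem for the totally real subfield $F_i^{+}$. The first step is a routine calculation with Tate's rank formula; the deep analytic input in the final step is Siegel's theorem, which we are permitted to assume.
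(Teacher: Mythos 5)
The paper does not actually prove this statement: it is quoted as Siegel's theorem with a reference to \cite{MR0285488}, so a proof must in effect reconstruct the classical deduction from Siegel's rationality theorem for partial zeta values of totally real fields. Your first step does this correctly: for totally odd $\chi$ every archimedean decomposition group is $\{1,j\}$, $j$ acts as $-1$ on $V_{\chi}$, so the relevant $\Hom$-space vanishes, $L_{S_{\infty}}(0,\chi)\neq 0$, and Conjecture \ref{conj:Stark} collapses to the equivariance statement $L_{S_{\infty}}(0,\chi)^{\sigma}=L_{S_{\infty}}(0,\chi^{\sigma})$.

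The genuine gap is exactly the point you flag and then leave unresolved. A bare application of Brauer induction produces linear characters $\psi_i$ of subgroups $H_i$ that need not be totally odd; for such $\psi_i$ Conjecture \ref{conj:Stark} is not a rationality statement at all, since $L(s,\psi_i)$ may vanish at $s=0$, the conjecture then involves transcendental regulators, and it is unknown in general --- Siegel's theorem says nothing about it. So ``it suffices to establish the conjecture for each $\psi_i$'' is only useful once the decomposition is arranged so that every contributing $\psi_i$ is odd, and that is the essential content of the reduction. It can be done using the centrality of $j$: for $H_i$ not containing $j$, replace $H_i$ by $H_i\langle j\rangle\simeq H_i\times\langle j\rangle$ via transitivity of induction (the intermediate induced character splits into the two linear extensions of $\psi_i$, one odd and one even at $j$); then apply the projector $\theta\mapsto\theta^-$ with $\theta^-(g)=\tfrac{1}{2}(\theta(g)-\theta(jg))$, which fixes $\chi$ and annihilates the even constituents, leaving $\chi=\sum_i n_i\,\Ind_{H_i}^{G}\psi_i$ with all $\psi_i$ odd and $j\in H_i$. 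Each base field $L_{\chi}^{H_i}$ is then totally real (it lies inside $L_{\chi}^{+}$; it is not ``the maximal totally real subfield $F_i^{+}$ of $F_i$'' as you write, since with $j\in H_i$ the field $F_i$ is itself totally real), $L(0,\psi_i)$ is a $\Q(\psi_i)$-linear combination of partial zeta values of that totally real field, and Siegel's theorem applies. This oddness-preserving induction is the same mechanism the paper uses (there with Artin induction) in the proof of Proposition \ref{prop:reduction-to-abelian}; note also that for the equivariance statement you genuinely need the integral coefficients provided by Brauer, since a decomposition of $m\chi$ for some $m>1$ would only determine $L_{S_{\infty}}(0,\chi)^{\sigma}/L_{S_{\infty}}(0,\chi^{\sigma})$ up to an $m$-th root of unity.
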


\begin{remark}
We point out that Conjecture \ref{conj:Stark} is known whenever $L_{\chi}$ is abelian
over the rationals \cite{MR1423032}. Using the analytic class number formula, one
can show that it holds for the trivial character. By Brauer induction and a
theorem of Artin one can deduce the conjecture for all rational-valued characters
\cite[Chapitre II, Corollaire 7.4]{MR782485}.
In fact, even the strong Stark conjecture (Conjecture \ref{conj:Strong-Stark} below)
is known in theses cases.
\end{remark}

\subsection{The $q$-index and the strong Stark conjecture}
Let $\mathcal{O}$ be a Dedekind domain with field of fractions $F$. Each finitely generated
torsion $\mathcal{O}$-module $M$ is of the form $M \simeq \bigoplus_{i=1}^n \mathcal{O} / \mathfrak{a}_i$,
where $n$ is a non-negative integer and each $\mathfrak{a}_i$ is an ideal in $\mathcal{O}$.
We set $\ell_{\mathcal{O}}(M) := \prod_{i=1}^n \mathfrak{a}_i$.
If $f:A \rightarrow B$ is a homomorphism of $\mathcal{O}$-modules such that both the kernel and 
cokernel of $f$ are finitely generated torsion $\mathcal{O}$-modules, then the 
$q$-index of $f$ is the fractional ideal
\[
 q(f) := \ell_{\mathcal{O}}(\cok(f)) \cdot \ell_{\mathcal{O}}(\ker(f))^{-1}.
\]
Now suppose the $\mathcal{O}$ is the ring of integers in a finite Galois extension $F$ of $\Q$
which is sufficiently large such that we have a ring isomorphism
\[
 F[G] \simeq \prod_{\chi \in \Irr(G)} M_{\chi(1)}(F).
\]
Choose an injective $\Z[G]$-homomorphism $\phi_S: X_{L,S} \rightarrow \mathcal{O}_{L,S}^{\times}$.
Note that $\phi_S$ then clearly induces a $\Q[G]$-isomorphism as in \eqref{eqn:phi_S}.
For each $\mathcal{O}[G]$-lattice $M$ we define an $\mathcal{O}$-homomorphism
\[
 \phi_{S,M}: \Hom_{\mathcal{O}}(M, \mathcal{O} \otimes X_{L,S})_G \stackrel{N_G}{\longrightarrow}
    \Hom_{\mathcal{O}}(M, \mathcal{O} \otimes X_{L,S})^G \longrightarrow 
    \Hom_{\mathcal{O}}(M, \mathcal{O} \otimes \mathcal{O}_{L,S}^{\times})^G,
\]
where the second arrow is induced by $\phi_S$. Since $F \otimes_{\mathcal{O}} \phi_{S,M}$ is
an isomorphism, the $q$-index $q(\phi_{S,M})$ is defined. If $M'$ is a second $\mathcal{O}[G]$-lattice,
then $q(\phi_{S,M}) = q(\phi_{S,M'})$ whenever 
$F \otimes_{\mathcal{O}} M \simeq F \otimes_{\mathcal{O}} M'$
as $F[G]$-modules by \cite[Chapitre II, Lemme 6.7]{MR782485}. 
Let $\chi$ be a character of $G$. By enlarging $F$ if necessary, 
we may assume that there is an 
$\mathcal{O}[G]$-lattice $M_{\chi}$ such that $F \otimes_{\mathcal{O}} M_{\chi}$ is an 
$F[G]$-module with character $\chi$. Then the fractional ideal
\[
 q_{\phi_S}(\chi) := q(\phi_{S, M_{\chi}})
\]
is well-defined. Since $q_{\phi_S}(\chi^{\sigma}) = q_{\phi_S}(\chi)^{\sigma}$ for each
$\sigma \in \Gal(F/\Q)$, we may actually view $q_{\phi_S}(\chi)$ as a fractional ideal of
$\Q(\chi)$. 
Chinburg \cite[Conjecture 2.2]{MR724009} suggested the following
refinement of Stark's conjecture \ref{conj:Stark}
whenever the set $S$ is sufficiently large.

\begin{conj}[Strong Stark] \label{conj:Strong-Stark}
	Assume that $S$ is a finite set of places of $K$ that contains
	$S_{\infty}$ and all places that ramify in $L/K$. 
	Assume in addition that the finite places in $S$ 
	generate the class group of $L$. Then
	we have that $A_S(\chi, \phi_S)^{\sigma} = A_S(\chi^{\sigma}, \phi_S)$ 
	for each $\sigma \in \Aut(\C)$ and an equality of fractional ideals 
	$q_{\phi_S}(\check \chi) = (A_S(\chi, \phi_S))$ in $\Q(\chi)$.
\end{conj}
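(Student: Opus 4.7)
The plan is to reduce Conjecture \ref{conj:Strong-Stark} to cases amenable to Iwasawa-theoretic methods and then to attack each case separately. By the functorial properties recorded in \cite[Chapitre II, \S 6--7]{MR782485}, both the rationality assertion and the ideal equality are compatible with inflation, induction and addition of characters and are independent of the auxiliary choices of $S$ and $\phi_S$. Consequently it suffices to prove the statement for all irreducible Artin characters $\chi$ of $G_K$, as $K$ ranges over all number fields. Fix such a $\chi$ and set $L := L_{\chi}$, $G := \Gal(L/K)$. Once Conjecture \ref{conj:Stark} has been verified for $\chi$, the number $A_S(\chi,\phi_S)$ lies in $\Q(\chi)$, and the ideal equality $q_{\phi_S}(\check\chi) = (A_S(\chi,\phi_S))$ may be checked one rational prime $p$ at a time, since a fractional $\Q(\chi)$-ideal is determined by its completions. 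By the Burns--Flach equivalence \cite[\S 3]{MR1981031}, the $p$-part of Conjecture \ref{conj:Strong-Stark} for all irreducible $\chi$ with $L_{\chi} \subseteq L$ is equivalent to the $p$-part of the ETNC for $(h^0(\Spec(L)), \mathfrak{M}(G))$, where $\mathfrak{M}(G)$ is a maximal order in $\Q[G]$, reducing the problem to one over the maximal order.

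The next step is to separate the irreducible characters according to their parity. After passing to a CM-closure $\widetilde L$ of $L$ and applying Brauer induction from subgroups of $\Gal(\widetilde L / \widetilde K)$, where $\widetilde K$ denotes the maximal totally real subfield of $\widetilde L$, every irreducible constituent is either totally odd or totally even. For odd $p$ and totally odd $\chi$, this is precisely Theorem \ref{thm:main-theorem} of the paper. For $p=2$ and totally odd $\chi$, one would need a $2$-primary analogue of \cite{Dasgupta-Kakde}; Nomura's work \cite{MR3408193, MR3208861} on the $2$-primary part of the Brumer and Brumer--Stark conjectures provides partial input, and a complete treatment would require extending the methods of \cite{Dasgupta-Kakde} to the prime $2$ together with the $2$-primary refinement of the equivariant formalism of \cite{MR2805422}. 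This step is delicate but seems within reach of current techniques.

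The genuine obstacle is the totally even case. Here the $L$-function vanishes at $s=0$ to a positive order equal to the number of infinite places of $K$ that split completely in $L_{\chi}$, and the conjecture becomes an assertion about higher regulators intertwined with the Iwasawa main conjecture for totally real fields. Unconditional results are restricted to $L/\Q$ abelian, thanks to Burns--Greither \cite{MR1992015} and Flach \cite{MR2863902}, which crucially rely on the vanishing $\mu_p=0$ of Ferrero--Washington \cite{MR528968}. In the general setting no such vanishing is known, and the strategy of Burns \cite{MR4092926} must assume both $\mu_p=0$ and the conjecture of Gross \cite{MR656068}. Producing an unconditional substitute---either a non-abelian analogue of Ferrero--Washington, or a direct construction of the relevant plus-part Iwasawa-theoretic invariant without the $\mu$-hypothesis, in the spirit of \cite{Dasgupta-Kakde} but on the totally real side---is the central difficulty and currently appears out of reach. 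Any proposed proof of Conjecture \ref{conj:Strong-Stark} in full generality must therefore ultimately confront the plus-part Iwasawa main conjecture without the $\mu=0$ assumption.
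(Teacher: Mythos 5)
What you are asked to prove here is Conjecture \ref{conj:Strong-Stark} itself, i.e.\ Chinburg's strong Stark conjecture, and this is an open conjecture: the paper does not prove it, and only establishes Theorem \ref{thm:main-theorem}, namely the $p$-part for totally odd characters and odd primes $p$ (together with the classically known cases of rational-valued characters and of $L_{\chi}$ abelian over $\Q$). Your proposal, read as a proof, therefore contains a gap that you yourself concede: the totally even characters and the $2$-primary part of the totally odd case are left entirely open, with only a description of what ``would be needed'' (a prime-$2$ analogue of \cite{Dasgupta-Kakde}, a plus-part main conjecture without $\mu=0$). A text ending in ``currently appears out of reach'' is a research programme, not an argument, so the equality $q_{\phi_S}(\check \chi) = (A_S(\chi, \phi_S))$ is not established for any character beyond the range already covered by Theorem \ref{thm:main-theorem} and the known special cases. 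There is no way to compare this with ``the paper's proof'' because no such proof exists or is claimed.

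Independently of the admitted open cases, the reduction you propose is itself not valid. A general number field has no ``CM-closure'': unless $L_{\chi}$ is totally real or CM there is no CM field containing it, and if the base field $K$ is not totally real the notions of totally odd and totally even character do not apply at all. Even when $K$ is totally real, an irreducible character of $\Gal(\widetilde L/K)$ for $\widetilde L$ neither totally real nor CM can satisfy $-\chi(1) < \chi(j) < \chi(1)$, because the complex conjugations act with mixed eigenvalues and need not lie in the centre; such constituents are neither odd nor even, and the linear characters produced by Brauer induction live over intermediate fields that are again arbitrary number fields, where the parity dichotomy is meaningless. The parity argument of Proposition \ref{prop:reduction-to-abelian} works precisely because one \emph{starts} with a totally odd $\chi$, so that $L_{\chi}$ is CM, $j$ is central in $G_{\chi}$, and one can restrict attention to subgroups containing $j$; it does not bootstrap from arbitrary Artin characters to the odd/even split. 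So even granting the two cases you leave open, your reduction of the general conjecture to them does not go through; the genuinely general part of the conjecture (mixed-signature and totally even characters, and everything at $p=2$) remains untouched.
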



Suppose that Stark's conjecture holds for $\chi$.
Let $p$ be a prime. We say that the $p$-part of the strong Stark conjecture holds if each 
prime above $p$ occurs with the same multiplicity in the prime ideal 
factorizations of $q_{\phi_S}(\check \chi)$ and $(A_S(\chi, \phi_S))$.

If $\chi$ is an Artin character of $G_K$, we set $G_{\chi}:= \Gal(L_{\chi}/K)$.
If $\chi$ is totally odd, then $L_{\chi}$ is a CM-field. We denote its
maximal totally real subfield by $L_{\chi}^+$ and set 
$G_{\chi}^+:= \Gal(L_{\chi}^+/K)$.

\begin{prop} \label{prop:reduction-to-abelian}
Let $p$ be a prime.
Then the $p$-part of the strong Stark conjecture holds for all totally odd characters 
if and only if
it holds for all totally odd characters $\chi$ such that $G_{\chi}^+$ is cyclic.
If $p$ is odd, one may assume in addition that the cardinality of $G_{\chi}^+$ is
prime to $p$.
\end{prop}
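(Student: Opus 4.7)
The plan is a Brauer-induction reduction that exploits the three compatibility properties of the $p$-part of the strong Stark conjecture: additivity in the character and invariance under both induction and inflation. By additivity I may assume $\chi$ is irreducible; write $G := G_\chi$ and $\bar G := G_\chi^+$, so that $G$ is a central extension of $\bar G$ by $\langle j\rangle$.

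\textbf{First reduction.} I apply Brauer's induction theorem to the trivial character of $\bar G$: there is an integer relation $1_{\bar G} = \sum_i n_i \Ind_{\bar H_i}^{\bar G}\bar\psi_i$ with each $\bar H_i$ elementary and each $\bar\psi_i$ linear. Let $H_i \subseteq G$ be the preimage of $\bar H_i$ (so that automatically $j \in H_i$) and $\psi_i := \Infl^{H_i}_{\bar H_i}\bar\psi_i$; inflating the identity to $G$ and using the projection formula, I obtain
$$\chi = \sum_i n_i \Ind_{H_i}^G\bigl(\psi_i \cdot \Res^G_{H_i}\chi\bigr).$$
Since $\psi_i(j) = 1$ while $\chi(j) = -\chi(1)$, every irreducible constituent of $\psi_i \cdot \Res^G_{H_i}\chi$ on which $j$ acts trivially contributes to a totally even virtual character of $G$ which must vanish because $\chi$ is itself totally odd. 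It therefore suffices to establish the $p$-part of the conjecture for totally odd irreducible characters $\eta$ of subgroups $H \supseteq \langle j \rangle$ of $G_K$ whose quotient $\bar H = H/\langle j\rangle$ is elementary.

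\textbf{Monomial reduction inside elementary subgroups.} Inside such an $\bar H = \bar C \times Q$ (with $\bar C$ cyclic and $Q$ a $q$-group), every irreducible is of the form $\bar\phi \boxtimes \bar\rho$ where $\bar\phi$ is linear on $\bar C$ and $\bar\rho$ is irreducible on $Q$; by the monomial theorem for $p$-groups, $\bar\rho$ is induced from a linear character of a suitable subgroup $Q' \leq Q$. Lifting this back to $H$ (with the appropriate adjustment by the sign character of $\langle j\rangle$), each totally odd irreducible $\eta$ of $H$ is written as $\Ind_K^H \mu$ for a totally odd \emph{linear} character $\mu$ of a subgroup $K \supseteq \langle j\rangle$ of $H$. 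Because $\mu$ is one-dimensional, $G_\mu = K/\ker\mu$ embeds into $\C^{\times}$ and so is cyclic; hence $G_\mu^+$ is cyclic, which proves the first statement.

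\textbf{The prime-to-$p$ refinement.} For the refinement with $p$ odd, write the cyclic group $G_\mu = A \times B$ as the product of its Sylow $p$-subgroup and its prime-to-$p$ complement. Since $|A|$ is odd one has $j \in B$, and $\mu$ factors correspondingly as $\mu_A \cdot \mu_B$ with $\mu_B$ itself totally odd and of order coprime to $p$ (so that $|G_{\mu_B}^+|$ is prime to $p$). The remaining task is to deduce the $p$-part of strong Stark for $\mu$ from that for $\mu_B$. I expect to carry this out by combining a Brauer-type induction specific to the prime $p$ (producing only $p$-elementary inducing subgroups) with a direct Galois-equivariance argument over $\Gal(\Q(\mu)/\Q(\mu_B))$, comparing the $p$-adic valuations of the analytic quantity $L_S^{\ast}(0,\mu)$ and of the algebraic $q$-index on both sides of the conjecture. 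This last step is the main obstacle: removing the $p$-power factor $\mu_A$ is not a formal consequence of induction, inflation or additivity, so unlike the first part it cannot be carried out by generic character-theoretic manipulations and requires a genuine arithmetic comparison.
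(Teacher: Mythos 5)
Your first reduction is essentially sound, and it runs parallel to the paper's argument in a slightly different packaging: where you apply Brauer induction to the trivial character of $G_{\chi}^+$, use the projection formula and then monomiality of the (nilpotent) preimages $H_i$, the paper applies Artin's induction theorem directly to the family of subgroups $U$ of $G_{\chi}$ containing $j$ with $U/\langle j\rangle$ cyclic, accepts a multiplicity $m$, and notes that the $p$-parts of two fractional ideals agree if and only if those of their $m$-th powers do. Both routes land on totally odd \emph{linear} characters of subgroups containing $j$, hence on cyclic $G_{\chi}^+$ (your cancellation remark is actually vacuous: since $\psi_i(j)=1$ and $j$ acts as $-1$ on $V_{\chi}$, every constituent of $\psi_i\cdot\Res^G_{H_i}\chi$ is already odd). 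So the first assertion of the proposition is established by your argument.

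The second assertion, the reduction to $|G_{\chi}^+|$ prime to $p$ for odd $p$, is a genuine gap in your proposal: you state it as an open "main obstacle" and moreover misdiagnose what is needed. Stripping off the $p$-primary factor $\mu_A$ does not require any new comparison of $p$-adic valuations of $L$-values against $q$-indices; it follows from two classical inputs combined with an inertia-averaging trick (this is the adjustment of Ritter--Weiss that the paper carries out). First, since Stark's conjecture is known for totally odd characters (Siegel), $A_S(\chi,\phi_S)$ and $q_{\phi_S}(\check\chi)$ are Galois-equivariant, and every $\sigma$ in the inertia subgroup $\mathcal{I}_p$ of $\Gal(\Q(\chi)/\Q)$ at $p$ fixes the primes of $\Q(\chi)$ above $p$; hence the $p$-part for $\chi$ follows from the $p$-part for $\chi'=\sum_{\sigma\in\mathcal{I}_p}\chi^{\sigma}$, and one may assume $p$ is unramified in $\Q(\chi)$. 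Second, once $p$ is unramified in $\Q(\chi)$, every irreducible constituent $\psi$ of the restriction of $\chi$ to the Sylow $p$-subgroup $P$ of $G_{\chi}$ has $\Q(\psi)/\Q$ totally ramified at $p$, so these constituents occur in full orbits under $\Gal(\Q(\psi)/\Q)$, and the orbit sums are rational-valued (permutation) characters, for which the strong Stark conjecture is a theorem of Tate. Subtracting these known pieces one may assume $\Res^{G_{\chi}}_P\chi$ is trivial, i.e.\ that $|G_{\chi}^+|$ is prime to $p$; the hypothesis $p$ odd enters only to guarantee that the characters produced in this step remain odd (for $p=2$ the resulting character would be even, and the step fails). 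Without this inertia-averaging plus rational-character input, your decomposition $\mu=\mu_A\mu_B$ by itself proves nothing, so as written your argument establishes only the first claim of the proposition.
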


\begin{proof}
	Assume that Stark's conjecture holds for all characters.
	Then it is well-known (see \cite[Proposition 11]{MR1423032}) 
	that the $p$-part of the strong Stark conjecture holds
	for all characters if and only if it holds for all characters such that
	$G_{\chi}$ is cyclic of order prime to $p$. 
	We may adjust the argument for the non-trivial implication of the proposition as follows 
	(see also the proof of \cite[Corollary 2]{MR2805422}).
	
	We first note that Stark's conjecture holds for totally odd characters 
	by Siegel's theorem (Theorem \ref{thm:Siegel}). Fix a prime $p$.
	Let $\chi$ be an arbitrary totally odd character. If $m$ is a positive
	integer, then (the $p$-parts of) two fractional ideals in $\Q(\chi)$ agree if and only if
	their $m$-th powers agree. Thus it suffices to show the 
	$p$-part of the strong Stark conjecture
	for $m \chi$ for an appropriate integer $m>0$.
	Let $\mathcal{F}$ be the family of all subgroups $U$ of $G_{\chi}$ such that
	$U$ contains $j$ and $U/\langle j \rangle$ is cyclic. Then $G_{\chi}$ is the union
	of all subgroups $U$ in $\mathcal{F}$. 
	Thus by Artin's induction theorem \cite[Theorem 17]{MR0450380}
	there is a positive integer $m$ such that
	\[
		m \chi = \sum_{U \in \mathcal{F}} a_U \Ind_U^{G_{\chi}} \phi_U
	\]
	for appropriate $a_U \in \Z$ and linear characters $\phi_U$ of $U$.
	The corresponding intermediate extensions $L/L^U$ are again CM-extensions
	thanks to the condition $j \in U$. Hence each irreducible character of $U$
	is either totally odd or totally even.
	Since the induction of an even (odd) character is again even (odd), the even
	characters in the above expression must add up to $0$. In other words,
	we may assume that all $\phi_U$ are odd. 
	Since the strong Stark conjecture behaves well under addition and induction of characters,
	we may assume that $G_{\chi}^+$ is cyclic. This proves the first claim
	of the proposition.
	
	Let $\mathcal{I}_p$ be the inertia subgroup of the (abelian) extension
	$\Q(\chi)/\Q$ at $p$. Since each $\sigma \in \mathcal{I}_p$ fixes the primes
	above $p$ and Stark's conjecture holds for $\chi$, it suffices to show
	the $p$-part of Conjecture \ref{conj:Strong-Stark} for the character 
	$\chi' = \sum_{\sigma \in \mathcal{I}_p} \chi^{\sigma}$.
	We may and do therefore assume that $p$ is unramified in $\Q(\chi)$.
	Let $P$ be the Sylow $p$-subgroup of $G_{\chi}$.
	Let $\psi$ be an irreducible constituent of $\Res^{G_{\chi}}_P \chi$.
	Then $\Q(\psi)/\Q$ is totally ramified at $p$ so that $\psi^{\sigma}$ is also
	an irreducible constituent of $\Res^{G_{\chi}}_P \chi$ for each $\sigma \in 
	\Gal(\Q(\psi)/\Q)$. Thus $\Res^{G_{\chi}}_P \chi$ is a finite sum of
	characters of the form $\sum_{\sigma \in \Gal(\Q(\psi)/\Q)} \psi^{\sigma}$.
	As the latter is rational-valued (indeed a permutation character)
	and the strong Stark conjecture
	is known for those characters \cite[Chapitre II, Corollaire 7.4]{MR782485}, 
	we may assume that $\Res^{G_{\chi}}_P \chi$ is trivial whenever $p$ is odd.
	Note that the last step indeed fails for $p = 2$ because the resulting
	character would be even.
\end{proof}

\subsection{The equivariant Tamagawa number conjecture} \label{subsec:ETNC}
Let $L/K$ be a finite Galois extension of number fields with Galois group $G$.
Let $S$ be a finite set of places of $K$ that contains all archimedean places
and all places that ramify in $L$.
In \cite{MR1863302} Burns defines a canonical element $T\Omega(L/K,0)$
in the relative algebraic $K$-group $K_0(\Z[G], \R)$. The definition involves
the refined Euler characteristic of a trivialized complex and the leading term
of an equivariant $S$-truncated $L$-series at $s=0$. 
The occurring (perfect) complex is essentially
Tate's canonical class in $\Ext^2_{\Z[G]}(X_{L,S}, \mathcal{O}_{L,S}^{\times})$
and the trivialization is given by the negative of the Dirichlet map
\eqref{eqn:Dirichlet-map}.
We will not give any further details, as we will mainly work with an explicit
reformulation of the author for the case at hand.

The ETNC has been formulated by Burns and Flach \cite{MR1884523} in vast generality,
but for the pair $(h^0(\Spec(L)), \Z[G])$ 
simply asserts the following \cite[Theorem 2.4.1]{MR1863302}.

\begin{conj}
The element $T\Omega(L/K,0)$ in $K_0(\Z[G], \R)$ vanishes.
\end{conj}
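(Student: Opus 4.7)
The plan is to decompose the vanishing of $T\Omega(L/K,0)\in K_0(\Z[G],\R)$ into tractable pieces and attack each separately. First, by $p$-localising on $K_0$, using the canonical injection of $K_0(\Z[G],\Q)$ into $\bigoplus_p K_0(\Z_p[G],\Q_p)$ together with the idelic description of $K_0(\Z[G],\R)$, it suffices to establish the vanishing of the $p$-primary component of $T\Omega(L/K,0)$ for every rational prime $p$. When $L/K$ is a CM-extension and $p$ is odd, I would further split by the central idempotents $(1\pm j)/2$ to obtain a plus/minus decomposition; when $L/K$ is not CM, only the plus component is present.

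For the minus $p$-part at odd $p$, Theorem~\ref{thm:ETNC-holds} already resolves the case of abelian Sylow $p$-subgroup together with the ramification hypothesis on $p$-adic places. For the remaining cases one would apply an Iwasawa-theoretic strategy: realise the minus component of $T\Omega$ as a descent datum for an equivariant main conjecture of Dasgupta--Kakde type, and then descend from the cyclotomic $\Z_p$-tower over $L$. The descent is controlled by Gross's regulator conjecture and by the vanishing of the cyclotomic Iwasawa $\mu_p$-invariant of $L$, precisely the strategy pursued by Burns in \cite{MR4092926} and by the author in \cite{MR2545262, MR2805422, MR3552493}.

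For the plus $p$-part at odd $p$, the natural route is via Wiles' main conjecture for totally real fields, promoted to its equivariant form (Ritter--Weiss), followed by a descent from the cyclotomic tower over $L^+$. As in the abelian-over-$\Q$ case settled by Burns--Greither \cite{MR1992015} and Flach \cite{MR2863902}, where Ferrero--Washington \cite{MR528968} supplies $\mu_p=0$, this reduces the plus part to an Iwasawa-theoretic identity and ultimately to the same vanishing of $\mu_p$. The prime $p=2$ requires a genuinely separate treatment: the plus/minus decomposition is unavailable over $\Z_2$, the Dasgupta--Kakde input does not apply directly, and one must proceed through the partial $2$-primary results of Nomura \cite{MR3408193, MR3208861} together with a specifically $2$-adic analysis of $T\Omega$.

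The principal obstacle is the vanishing of Iwasawa $\mu_p$-invariants outside the abelian-over-$\Q$ setting: both the unrestricted minus part (beyond the hypotheses of Theorem~\ref{thm:ETNC-holds}) and the entire plus part at odd primes hinge on it, and no unconditional proof is currently available. A secondary but essential obstacle is the $2$-primary behaviour, where neither the algebraic sign decomposition nor the analytic inputs used at odd primes survive intact. Accordingly, this strategy yields an unconditional proof only in the subcases covered by Theorem~\ref{thm:ETNC-holds} (and its plus analogue, where $\mu_p$ is known to vanish, e.g.\ when $L$ is abelian over $\Q$); otherwise it establishes $T\Omega(L/K,0)=0$ conditional on the vanishing of the relevant $\mu_p$-invariants and, for the minus part, on Gross's conjecture.
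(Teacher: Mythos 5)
The statement you are asked about is not a theorem of the paper but the ETNC itself for the pair $(h^0(\Spec(L)),\Z[G])$: the paper merely \emph{formulates} this conjecture and then proves special cases of it (the minus $p$-part for odd $p$ under the hypotheses of Theorem~\ref{thm:ETNC-holds}, from which Theorem~\ref{thm:main-theorem} is deduced). There is no proof of the full statement in the paper, nor in the literature, so no roadmap of the kind you sketch can close the argument; to your credit, your proposal is explicit about this and ends up conditional, which is the honest assessment. Your unconditional content coincides with what the paper actually establishes (plus the abelian-over-$\Q$ case of Burns--Greither and Flach), so you have not proved the statement, but you have also not misrepresented what can be proved.

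Two technical points in your reduction deserve flagging. First, the very first step, decomposing $T\Omega(L/K,0)$ into $p$-parts, already requires knowing that $T\Omega(L/K,0)$ lies in the subgroup $K_0(\Z[G],\Q)$ of $K_0(\Z[G],\R)$, and by Burns this rationality is \emph{equivalent} to Stark's conjecture for all irreducible characters of $G$ --- known for totally odd characters by Siegel, but open in general; so even your plus-part strategy is conditional one step earlier than you indicate. Second, the plus/minus splitting via $(1\pm j)/2$ exists only for CM-extensions, where $j$ is a central involution; for a general Galois extension there is no such decomposition, so the phrase ``only the plus component is present'' is not a meaningful reduction --- one must instead argue as the paper does, reducing via Brauer induction and functoriality in $K_0$ to suitable (e.g.\ $p$-elementary) subextensions before any sign decomposition is available. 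With these caveats, your description of the conditional Iwasawa-theoretic route (equivariant main conjecture, descent governed by $\mu_p=0$ and Gross's conjecture, separate difficulties at $p=2$) accurately reflects the state of the art referenced in the paper, but it remains a programme, not a proof.
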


We point out that this assertion is equivalent to the `lifted root number
conjecture' of Gruenberg, Ritter and Weiss \cite{MR1687551} by
\cite[Theorem 2.3.3]{MR1863302}. For our purposes, however, it is more
important to note that $T\Omega(L/K,0)$ belongs to the subgroup $K_0(\Z[G], \Q)$
if and only if Stark's conjecture holds for all irreducible characters of $G$
by \cite[Theorem 2.2.4]{MR1863302}. In this case the ETNC decomposes into conjectures
at each prime $p$ by means of the canonical isomorphism
\[
	K_0(\Z[G], \Q) \simeq \bigoplus_p K_0(\Z_p[G], \Q_p)
\]
Let $T\Omega(L/K,0)_p$ be the image of $T\Omega(L/K,0)$ under this isomorphism.
Then $T\Omega(L/K,0)_p$ is torsion if and only if the $p$-part of the strong Stark
conjecture holds by \cite[Theorem 2.2.4]{MR1863302}
(or rather \cite[Lemmas 2.2.6 and 2.2.7]{MR1863302}).

Now assume that $L/K$ is a CM-extension. Then for each odd prime $p$
there is a canonical isomorphism
\[
	K_0(\Z_p[G], \Q_p) \simeq K_0(\Z_p[G]_+, \Q_p) \oplus K_0(\Z_p[G]_-, \Q_p),
\]
where $\Z_p[G]_{\pm} := \Z_p[G]/(1 \mp j)$. By Theorem \ref{thm:Siegel}
we therefore obtain a well-defined element $T\Omega(L/K,0)_p^-$
in $K_0(\Z_p[G]_-, \Q_p)$. We say that the $p$-minus part of the ETNC holds
if the latter element vanishes. In particular, the $p$-part of the strong Stark conjecture
then holds for all odd characters of $G$.

\begin{lemma} \label{lem:2-implies-1}
Theorem \ref{thm:ETNC-holds} for abelian extensions $L/K$
implies Theorem \ref{thm:main-theorem}.
\end{lemma}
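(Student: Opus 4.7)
The plan is to reduce Theorem \ref{thm:main-theorem} to the abelian case of Theorem \ref{thm:ETNC-holds} by a short sequence of standard manipulations that concentrate everything onto the CM-field $L_{\chi}$ itself. First, I would invoke Proposition \ref{prop:reduction-to-abelian} to assume that $G_{\chi}^{+}$ is cyclic of order prime to $p$. Two structural consequences then come for free: since $j$ is central in $G_{\chi}$ and $G_{\chi}/\langle j\rangle \simeq G_{\chi}^{+}$ is cyclic, the group $G_{\chi}$ is generated by two commuting elements and is therefore abelian; and since $p$ is odd while $|G_{\chi}^{+}|$ is coprime to $p$, the order $|G_{\chi}| = 2|G_{\chi}^{+}|$ is itself coprime to $p$.

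Next, I would take $L = L_{\chi}$ and verify the hypotheses of Theorem \ref{thm:ETNC-holds}: $L/K$ is Galois CM by definition of $L_{\chi}$; $\Gal(L/K) = G_{\chi}$ is abelian (hence has abelian Sylow $p$-subgroup, which is in fact trivial); and at any $p$-adic place $v$ of $K$ the ramification index in $L_{\chi}$ divides $|G_{\chi}|$, hence is coprime to $p$, so that $v$ is at most tamely ramified in $L_{\chi}$. The abelian case of Theorem \ref{thm:ETNC-holds} then asserts that $T\Omega(L_{\chi}/K,0)_p^{-} = 0$ in $K_0(\Z_p[G_{\chi}]_-,\Q_p)$.

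Finally, as recalled in Section \ref{subsec:ETNC}, the vanishing of $T\Omega(L_{\chi}/K,0)_p^{-}$ implies the $p$-part of the strong Stark conjecture for every odd character of $G_{\chi}$. Since $\chi$ is totally odd, this is exactly Theorem \ref{thm:main-theorem} for $\chi$.

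The substantive content of the proof is entirely in Theorem \ref{thm:ETNC-holds}; the reduction above is essentially bookkeeping. Its only slightly delicate point is the observation that, after applying Proposition \ref{prop:reduction-to-abelian}, $|G_{\chi}|$ is already coprime to $p$, so that neither an auxiliary cyclotomic enlargement nor any more refined analysis of wild ramification is required to meet the hypotheses of Theorem \ref{thm:ETNC-holds}.
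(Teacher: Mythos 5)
Your argument is correct and is exactly the one the paper intends: Proposition \ref{prop:reduction-to-abelian} reduces to totally odd $\chi$ with $G_{\chi}$ abelian of order prime to $p$ (so the hypotheses of Theorem \ref{thm:ETNC-holds} hold trivially, ramification above $p$ being automatically tame), and the discussion in \S\ref{subsec:ETNC} converts the vanishing of $T\Omega(L_{\chi}/K,0)_p^-$ into the $p$-part of the strong Stark conjecture for the odd character $\chi$. The paper's own proof is a one-line appeal to these same ingredients; you have merely spelled out the bookkeeping.
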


\begin{proof}
This follows from the above considerations and Proposition \ref{prop:reduction-to-abelian}.
\end{proof}

\section{Ray class groups}
\subsection{A natural exact sequence} \label{subsec:exact-sequence}
Let $L/K$ be a finite Galois extension of number fields with Galois group $G$.
If $T$ is a finite set of finite places of $K$, we write $\cl_L^T$
for the ray class group of $L$ for the 
modulus $\mathfrak M_T := \prod_{w \in T(L)} \mathfrak P_w$.
Let $S$ be a second finite set of places of $K$ such that $S \cap T$ is empty. We denote
the subset of $S$ comprising all finite places in $S$ by $S_f$. Then there is a natural map
$Y_{L,S_f} \rightarrow \cl_L^T$ which sends each place $w \in S_f(L)$ to the class
$[\mathfrak P_w] \in \cl_L^T$ of the associated prime ideal. We denote the cokernel of
this map by $\cl_{L,S}^T$. 
If $T$ is empty, we usually remove it from the notation.
We set $E_{L,S} := \mathcal{O}_{L,S}^{\times}$ and define
$E_{L,S}^T$ to be the subgroup of those units in $E_{L,S}$ which are congruent to $1$
modulo each place in $T(L)$. We have an exact sequence of $\Z[G]$-modules
\begin{equation} \label{eqn:ray-class-sequence}
	0 \rightarrow E_{L,S}^T \rightarrow E_{L,S} \rightarrow
	(\mathcal{O}_{L,S} / \mathfrak M_T )^{\times} \stackrel{\nu}{\longrightarrow} 
	\cl_{L,S}^T \rightarrow \cl_{L,S} \rightarrow 0,
\end{equation}
where the map $\nu$ lifts an element 
$\overline x \in (\mathcal{O}_{L,S} / \mathfrak M_T )^{\times}$
to $x \in \mathcal{O}_{L,S}$ and sends it to the class of the principal ideal $(x)$
in $\cl_{L,S}^T$.

In the following we assume that no non-trivial root of unity in $L$ is 
congruent to $1$ modulo each place in $T(L)$. In other words, we assume that
$E_{L,S}^T$ is torsion-free. For instance, this condition holds if $T$ contains two places
of different residue characteristic or one place of sufficiently large norm.

Let $L/K$ be a CM-extension. For any $G$-module $M$, we define submodules
\[
	M^{\pm} := \left\{m \in M \mid jm = \pm m \right\}.
\]
In particular, we have that $\mu_L := E_{L,S_{\infty}}^-$ is the group of roots of
unity in $L$, and that $(E_{L,S_{\infty}}^T)^-$ vanishes. We set
\[
	A_{L,S}^T := (\cl_{L,S}^T)^-.
\]
If $p$ is a prime, we set $M(p) := M \otimes \Z_p$ for any finitely generated $\Z$-module $M$.
If $p$ is odd, then taking $p$-minus-parts is an exact functor so that
\eqref{eqn:ray-class-sequence} induces an exact sequence of $\Z_p[G]_-$-modules
\begin{equation} \label{eqn:ray-ses}
0 \rightarrow \mu_L(p) \rightarrow
(\mathcal{O}_{L} / \mathfrak M_T )^{\times}(p)^- \rightarrow 
	A_{L,S_{\infty}}^T(p) \rightarrow A_{L,S_{\infty}}(p) \rightarrow 0.
\end{equation}


\subsection{Stickelberger elements}
Let $L/K$ be an \emph{abelian} extension of number fields. Let $S$ and $T$ be two
disjoint finite sets of places of $K$ with $S_{\infty} \subseteq S$.
For each irreducible character $\chi$ of $G$ we define the $S$-truncated
$T$-modified $L$-series of $\chi$ by
\[
	L_S^T(s,\chi) := L_S(s,\chi) \prod_{v \in T} (1 - \chi(\phi_w)N(v)^{1-s}).
\]
We adopt the convention that $\chi(\phi_w) = 0$ if $\chi$ is ramified at $v$.
The Stickelberger element attached to $S$ and $T$ is the unique element
$\theta_{L/K,S}^T \in \C[G]$ such that for all irreducible characters $\chi$ of $G$
one has
\[
	\chi(\theta_{L/K,S}^T) = L_S^T(0,\check \chi).
\]
It follows from Theorem \ref{thm:Siegel} that $\theta_{L/K,S}^T$ actually belongs
to the rational group ring $\Q[G]$. In fact, we will need the following finer result.

\begin{prop} \label{prop:int-Stickelberger}
Let $p$ be a prime and let $L/K$ be an abelian Galois extension of number fields
with Galois group $G$. 
Let $S$ and $T$ be two disjoint finite sets of places of $K$
such that the following conditions are satisfied:
\begin{enumerate}
\item 
The union of $S$ and $T$ contains all non-$p$-adic ramified places;
\item 
the set $S$ contains all $p$-adic places that ramify wildly;
\item 
all archimedean places lie in $S$;
\item 
the group $E_{L,S}^{T_{0}}$ is torsion-free, where $T_{0}$ 
denotes the set of all unramified places in $T$.
\end{enumerate}
Then we have that $\theta_{L/K,S}^T \in \Z_p[G]$.
\end{prop}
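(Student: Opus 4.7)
The plan is to reduce the statement to the classical integrality theorem of Cassou-Nogu\`es and Deligne-Ribet, which asserts that $\theta_{L/K,S'}^{T'} \in \Z[G]$ whenever $S'$ contains $S_\infty$ together with all finite places of $K$ ramifying in $L/K$, $S' \cap T' = \emptyset$, and $E_{L,S'}^{T'}$ is torsion-free.

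First I would pass to the pair $(S',T_0)$ with $S' := S \cup S_\mathrm{ram}$, where $S_\mathrm{ram}$ denotes the set of finite places of $K$ that ramify in $L/K$. Then $S'$ contains $S_\infty$ by hypothesis (iii), all ramified places by construction, and $S' \cap T_0 = \emptyset$ since $T_0$ consists of unramified places disjoint from $S$. The module $E_{L,S'}^{T_0}$ has the same torsion subgroup as $E_{L,S}^{T_0}$ (namely the roots of unity in $L$ congruent to $1$ modulo $\mathfrak{M}_{T_0}$), which vanishes by (iv). The classical theorem therefore yields $\theta_{L/K,S'}^{T_0} \in \Z[G] \subseteq \Z_p[G]$.

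Next I would compare $\theta_{L/K,S}^T$ to $\theta_{L/K,S'}^{T_0}$. For each ramified place $v$, fix a lift $\phi_v \in G_w$ of the Frobenius and let $e_{I_v} := |I_v|^{-1}\sum_{i \in I_v} i \in \Q[G]$ be the idempotent cutting out the $I_v$-invariants, and set $T_\mathrm{ram} := T \setminus T_0$. A character-by-character comparison of Euler factors then yields the identity in $\Q[G]$:
\[
\theta_{L/K,S}^T \cdot \prod_{v \in S' \setminus S}(1 - \phi_v^{-1} e_{I_v}) \;=\; \theta_{L/K,S'}^{T_0} \cdot \prod_{v \in T_\mathrm{ram}}(1 - N(v)\phi_v^{-1} e_{I_v}).
\]

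Finally I would analyse the correction factors using (i) and (ii). By (i), every non-$p$-adic ramified place not in $S$ lies in $T_\mathrm{ram}$; by (ii), every $p$-adic ramified place not in $S$ is tamely ramified. Hence every $v \in S' \setminus S$ that does not also lie in $T_\mathrm{ram}$, as well as every $p$-adic $v \in T_\mathrm{ram}$, is tamely ramified with $|I_v|$ coprime to $p$, so the corresponding correction factor lies in $\Z_p[G]$. The delicate case is that of a non-$p$-adic $v \in T_\mathrm{ram}$, which contributes to both products. For such $v$ the abelian tame inertia index $|I_v^{\mathrm{tame}}|$ divides $N(v) - 1$, while the wild inertia is an $\ell$-group with $\ell \neq p$; hence $v_p(|I_v|) \leq v_p(N(v) - 1)$, so that $(N(v) - 1)\phi_v^{-1} e_{I_v} \in \Z_p[G]$ and the difference
\[
(1 - N(v)\phi_v^{-1} e_{I_v}) - (1 - \phi_v^{-1} e_{I_v}) \;=\; -(N(v) - 1)\phi_v^{-1} e_{I_v}
\]
is $p$-integral even though the individual factors are not. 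Exploiting this cancellation to redistribute the contributions of the non-$p$-adic $v \in T_\mathrm{ram}$, one rewrites the identity as $\theta_{L/K,S}^T \cdot U = V$ with $U,V \in \Z_p[G]$; combined with the a priori knowledge that $\theta_{L/K,S}^T \in \Q[G]$ (from Siegel's theorem), this forces $\theta_{L/K,S}^T \in \Z_p[G]$. The main obstacle is this final step: verifying that, after the redistribution, the multiplier $U$ is such that its relation to $V$ uniquely determines $\theta_{L/K,S}^T$ as a $p$-integral element in every simple component of $\Q_p[G]$, which requires careful bookkeeping of the zero-divisor structure induced by the idempotents $e_{I_v}$.
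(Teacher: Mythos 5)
Your reduction to the pair $(S',T_0)$ and the character-by-character Euler-factor identity relating $\theta_{L/K,S}^T$ to $\theta_{L/K,S'}^{T_0}$ are both correct, but the concluding step is not ``careful bookkeeping'' --- it is a genuine gap, and the division strategy cannot be completed as stated. The multiplier $U=\prod_{v\in S'\setminus S}\bigl(1-\phi_w^{-1}|I_v|^{-1}N_{I_v}\bigr)$ is a zero divisor in $\Q_p[G]$: for any character $\chi$ that is trivial on $G_w$ for some added ramified place $v$ one has $\chi(U)=0$ and also $\chi(\theta_{L/K,S'}^{T_0})=L_{S'}^{T_0}(0,\check\chi)=0$, so in that component your identity $\theta_{L/K,S}^T\cdot U=V$ reads $0=0$ and carries no information whatsoever about $\chi(\theta_{L/K,S}^T)$, which is in general non-zero (for odd $\chi$, say). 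Even in components where $\chi(U)\neq 0$, the value $1-\check\chi(\phi_w)$ need not be a $p$-adic unit (e.g.\ when $\chi(\phi_w)$ has non-trivial $p$-power order), and ``$\theta$ rational with $U\theta\in\Z_p[G]$'' does not force $\theta\in\Z_p[G]$; the model obstruction is $\theta=1/p$, $U=p$. The cancellation $(1-N(v)\phi_w^{-1}|I_v|^{-1}N_{I_v})-(1-\phi_w^{-1}|I_v|^{-1}N_{I_v})=-\phi_w\frac{N(v)-1}{|I_v|}N_{I_v}$ that you invoke is correct and $p$-integral for $v\nmid p$, but it only shows the two kinds of factors are congruent modulo $\Z_p[G]$ (this is precisely the computation the paper uses later, to place $(\theta_{L/K,S_1}^T)^{\sharp}$ in the Sinnott--Kurihara ideal); it does not repair the loss of information in the annihilated or non-unit components.

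The real content of the proposition beyond the classical Cassou-Nogu\`es--Deligne--Ribet theorem is exactly the flexibility you are trying to generate by division: tamely ramified $p$-adic places may be omitted from $S\cup T$ altogether, and non-$p$-adic ramified places may be moved from $S$ into $T$ (with the convention $\chi(\phi_w)=0$ for $\chi$ ramified at $v$). Establishing this requires the finer congruence information underlying Deligne--Ribet, applied componentwise --- for instance by decomposing $\Z_p[G]$ according to the ($p'$-order) characters of the inertia subgroups at the relevant places and comparing with Stickelberger elements of intermediate extensions --- rather than the crude integrality statement you quoted followed by division in the group ring. The paper itself does not reprove this: it deduces the proposition from Theorem 5.2 of \cite{MR3552493} (taking $H=1$), whose proof rests on \cite{MR524276} and \cite{MR579702}, with \cite[Lemma 1]{MR2805422} treating an important special case; if you want a self-contained argument you should follow that route rather than the division argument.
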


\begin{proof}
This follows from \cite[Theorem 5.2]{MR3552493} (take $H=1$)
and heavily relies on results of Pi.\ Cassou-Nogu\`es \cite{MR524276} and of Deligne
and Ribet \cite{MR579702}. See also
\cite[Lemma 1]{MR2805422} for an important special case.
\end{proof}

Let $x \mapsto x^{\sharp}$ be the (anti-) involution on $\Z[G]$ induced by
$g \mapsto g^{-1}$ for each $g \in G$. 
Let $T_0$ be a finite set of places of $K$ that do not ramify in $L$ and are
such that $E_{L,S}^{T_0}$ is torsion-free
(recall that this condition does not depend on $S$). 
Following \cite{MR2371374} and \cite[\S 1]{Dasgupta-Kakde} we define the 
\emph{Sinnott--Kurihara ideal} to be the fractional $\Z[G]$-ideal
\begin{equation} \label{eqn:SK-ideal}
	\mathrm{SKu}^{T_0}(L/K) := (\theta_{L/K,S_{\infty}}^{T_0})^{\sharp}
	\prod_{v \in S_{\ram}} (N_{I_v}, 1 - \phi_w |I_v|^{-1}N_{I_v})
\end{equation}
and note that this is actually contained in $\Z[G]$ 
(see \cite[Lemma 3.4]{Dasgupta-Kakde}).

\subsection{Fitting ideals and the strong Brumer--Stark conjecture}
If $M$ is a finitely presented module over a commutative ring $R$,
we denote the (initial) Fitting ideal of $M$ over $R$ by $\Fitt_R(M)$.
For basic properties of Fitting ideals including the following well-known
lemma we refer the reader to Northcott's excellent book \cite{MR0460383}.

\begin{lemma} \label{lem:Fitting-general}
Let $R$ be a commutative ring and let $M$ and $M'$ be finitely presented
$R$-modules. Then the following hold.
\begin{enumerate}
\item 
We have that $\Fitt_R(M \oplus M') = \Fitt_R(M) \cdot \Fitt_R(M')$.
\item 
If $M \rightarrow M' \rightarrow 0$ is an epimorphism,
then we have an inclusion
\[
	\Fitt_R(M) \subseteq \Fitt_R(M').
\]
\end{enumerate}
\end{lemma}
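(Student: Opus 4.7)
The plan is to reduce both assertions to elementary manipulations of presentation matrices, as in Northcott's treatment \cite{MR0460383} which the paper already cites. Choose presentations
\[
R^{a} \xrightarrow{A} R^{b} \to M \to 0, \qquad R^{a'} \xrightarrow{A'} R^{b'} \to M' \to 0,
\]
so that by definition $\Fitt_R(M)$ is the ideal generated by the $b \times b$ minors of the matrix $A$, and similarly for $M'$. If $a < b$ we may freely pad $A$ with zero columns, since this does not change the ideal of $b \times b$ minors; hence there is no loss in assuming $a \geq b$ and $a' \geq b'$ throughout.

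For part (i), a presentation of $M \oplus M'$ on $b + b'$ generators is given by the block diagonal matrix $\bigl(\begin{smallmatrix} A & 0 \\ 0 & A' \end{smallmatrix}\bigr)$. Any $(b+b') \times (b+b')$ minor of this matrix either contains a full zero row or column, in which case it vanishes, or else it factors along the block structure as a product of a $b \times b$ minor of $A$ with a $b' \times b'$ minor of $A'$. Conversely every such product arises in this way. Thus $\Fitt_R(M \oplus M')$ equals $\Fitt_R(M) \cdot \Fitt_R(M')$.

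For part (ii), push the chosen generators of $M$ through $\pi: M \twoheadrightarrow M'$ to obtain $b$ generators of $M'$. Every column of $A$ remains a relation for $M'$, while the additional relations needed to present $M'$ on these generators are precisely the images of elements of $\ker(\pi)$ lifted to $R^{b}$. Choosing such lifts as the columns of an auxiliary matrix $B$ yields a presentation matrix $(A \mid B)$ for $M'$ on the same set of $b$ generators. Every $b \times b$ minor of $A$ appears among the $b \times b$ minors of $(A \mid B)$, whence $\Fitt_R(M) \subseteq \Fitt_R(M')$. There is no genuine obstacle here: both statements are classical and the only subtlety is the zero-column padding convention ensuring the minors have the expected size; in principle a direct reference to \cite{MR0460383} would suffice.
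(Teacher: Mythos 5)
The paper gives no proof of this lemma at all---it simply refers to Northcott---and your presentation-matrix argument is exactly the standard proof that citation points to; both parts are correct, including the padding convention and the $(A \mid B)$ presentation in part (ii). One small touch-up for part (i): a maximal minor that takes $k \neq b$ columns from the $A$-block vanishes not because it literally contains a zero row or column, but because its top $b$ rows (resp.\ bottom $b'$ rows, when $k > b$) are supported on fewer than $b$ (resp.\ $b'$) columns of the chosen submatrix and are therefore linearly dependent.
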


Let $p$ be  prime and let $G$ be a finite abelian group.
If $M$ is a $\Z_p[G]$-module, we denote the Pontryagin dual
$\Hom_{\Z_p}(M, \Q_p/ \Z_p)$ endowed with the contragredient $G$-action by $M^{\vee}$.
The following lemma is certainly well-known.

\begin{lemma} \label{lem:Fitting-ideals}
Let $p$ be an odd prime and let $L/K$ be a finite abelian CM-extension
with Galois group $G$.
Let $M$ be a finite cohomologically trival $\Z_p[G]_-$-module.
Then the following hold.
\begin{enumerate}
\item 
The Fitting ideal $\Fitt_{\Z_p[G]_-}(M)$ is principal.
\item 
We have an equality
\[
	\Fitt_{\Z_p[G]_-}(M^{\vee}) = \Fitt_{\Z_p[G]_-}(M)^{\sharp}
\]
\item 
The index of the Fitting ideal $\Fitt_{\Z_p[G]_-}(M)$ in $\Z_p[G]_-$
is finite and 
\[
	[\Z_p[G]_- : \Fitt_{\Z_p[G]_-}(M)] = |M|.
\]
\end{enumerate}
\end{lemma}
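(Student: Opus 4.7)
The plan is to reduce everything to the existence of a free resolution of $M$ of equal rank on both sides, after which all three assertions fall out from linear algebra over the commutative ring $R := \Z_p[G]_-$. Since $p$ is odd and $G$ is abelian, $R$ is commutative, semilocal, and free of finite rank over $\Z_p$, and the idempotent $e_- = (1-j)/2$ lies in $\Z_p[G]$. By a classical theorem of Nakayama, cohomological triviality of a finite $\Z_p[G]$-module is equivalent to its $\Z_p[G]$-projective dimension being at most one, and this property descends to $R$ after applying $e_-$. Hence $M$ fits into $0 \to P_1 \to P_0 \to M \to 0$ with $P_0, P_1$ finitely generated projective $R$-modules; these are free because $R$ is semilocal, and rationalizing forces $\rank_R P_0 = \rank_R P_1 = n$ since $M \otimes_{\Z_p} \Q_p = 0$. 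One obtains a presentation
\[
    0 \longrightarrow R^n \xrightarrow{A} R^n \longrightarrow M \longrightarrow 0
\]
with $\det A$ a non-zero-divisor. Part (i) is then immediate: $\Fitt_R(M) = (\det A)$ is principal.

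For (ii), I apply $\Hom_R(-,R)$ to this resolution. Since $M$ is finite and $R$ is $\Z_p$-torsion-free, $\Hom_R(M,R) = 0$, and the long exact sequence yields $\Ext^1_R(M,R) \simeq \coker(A^t: R^n \to R^n)$, whose Fitting ideal is $(\det A^t) = (\det A)$. On the other hand, $R$ is a symmetric Frobenius $\Z_p$-algebra via $\tau(x) := $ coefficient of $1 \in G$ in $x$, which induces an $R$-linear isomorphism $R \simeq \Hom_{\Z_p}(R, \Z_p)$. Combined with the standard identity $\Ext^1_R(M,R) \simeq \Hom_R(M, R \otimes_{\Z_p} \Q_p/\Z_p)$ for finite $M$ (obtained by applying $\Hom_R(M,-)$ to $0 \to R \to R[1/p] \to R[1/p]/R \to 0$), this gives
\[
    \Ext^1_R(M,R) \;\simeq\; \Hom_R\bigl(M, \Hom_{\Z_p}(R, \Q_p/\Z_p)\bigr) \;\simeq\; \Hom_{\Z_p}(M, \Q_p/\Z_p),
\]
where the right-hand side carries the natural $R$-action $(r \cdot f)(m) = f(rm)$. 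The contragredient $G$-action adopted by the paper differs from this natural action by pre-composition with $\sharp$, and a $\sharp$-twist replaces a presentation matrix $A$ by $A^\sharp$; hence $\Fitt_R(M^\vee) = \Fitt_R(M)^\sharp$.

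For (iii), I view the resolution as a short exact sequence of finitely generated free $\Z_p$-modules. Setting $d := \rank_{\Z_p}(R)$, a standard block-matrix computation over any $\Z_p$-basis of $R$ gives $\det_{\Z_p}(A) = N_{R/\Z_p}(\det_R A)$, where $\det_{\Z_p}(A)$ is the determinant of $A$ acting $\Z_p$-linearly on $R^n \simeq \Z_p^{nd}$. Consequently
\[
    |M| \;=\; \bigl|\det_{\Z_p}(A)\bigr|_p^{-1} \;=\; \bigl|N_{R/\Z_p}(\det_R A)\bigr|_p^{-1} \;=\; \bigl|R/(\det_R A)\bigr| \;=\; [R : \Fitt_R(M)],
\]
where the third equality is the classical identity $|R/xR| = |N_{R/\Z_p}(x)|_p^{-1}$ for any non-zero-divisor $x \in R$ with finite cokernel, and finiteness of the index is automatic since $\det A$ is a non-zero-divisor.

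The argument is essentially formal once the equal-rank free resolution of $M$ is secured. The step that requires genuine care is the $\sharp$-bookkeeping in (ii): the involution appears only because the contragredient $G$-action differs from the natural $R$-action on $\Hom_{\Z_p}(-,\Q_p/\Z_p)$, and it must be tracked through the Frobenius self-duality of $R$. The determinant-norm identity in (iii) is likewise classical and standard.
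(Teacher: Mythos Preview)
Your argument is correct and follows essentially the same route as the paper: both obtain (i) from the equivalence of cohomological triviality with projective dimension $\leq 1$ together with the fact that $\Z_p[G]_-$ is semilocal, yielding an equal-rank free resolution. For (ii) and (iii) the paper simply cites references, whereas you spell out the standard Frobenius-duality and norm arguments; these are the expected proofs and your handling of the $\sharp$-twist arising from the contragredient convention is accurate.
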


\begin{proof}
A finite $\Z_p[G]_-$-module is cohmologically trivial if and only if its projective
dimension is at most $1$. Since $\Z_p[G]_-$ is semilocal, any first syzygy of $M$ is free.
This proves (i). For (ii) see \cite[Lemma 6]{MR2046598} 
(note that we have to add the $^{\sharp}$ on the right because of 
our different conventions on the action of $G$ on Pontryagin duals). 
Also (iii) is a standard result (see the proof of
\cite[Theorem 4.11]{MR1750935}, for instance).
\end{proof}

The following deep result has recently been shown by Dasgupta and Kakde 
\cite[Theorem 3.5]{Dasgupta-Kakde}. In particular, it
settles the so-called strong Brumer--Stark conjecture away from $2$.

\begin{theorem} \label{thm:Dasgupta-Kakde}
Let $L/K$ be an abelian CM-extension of number fields with Galois group $G$.
Let $T_0$ be a finite set of places of $K$ that do not ramify in $L$ and are such that
$E_S^{T_0}$ is torsion-free. 
Let $p$ be an odd prime.
Then we have that
\[
 	\Fitt_{\Z_p[G]_-} (A_{L, S_{\infty}}^{T_0}(p)^{\vee}) = \mathrm{SKu}^{T_0}(L/K)(p).
\]
\end{theorem}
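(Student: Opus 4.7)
The plan is to approach this via Iwasawa theory and the equivariant main conjecture (EMC) for totally real fields, following the Ribet--Wiles--Mazur--Wiles strategy that culminates in the work of Dasgupta and Kakde.

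First, I would pass to the Iwasawa-theoretic picture. Let $L_\infty$ be the cyclotomic $\Z_p$-extension of $L$, let $L_n$ denote its $n$-th layer, and set $\mathcal{G} := \Gal(L_\infty/K)$, a $p$-adic Lie group of dimension one. The modules $A_{L_n, S_\infty}^{T_0}(p)^{\vee}$, together with appropriate local data at the places that ramify in $L/K$, should be assembled into a Ritter--Weiss style $\Z_p[[\mathcal{G}]]_-$-module $X_\infty$ whose Fitting ideal controls the finite-level groups at each finite layer. The local Euler factors $(N_{I_v}, 1 - \phi_w |I_v|^{-1} N_{I_v})$ appearing in $\mathrm{SKu}^{T_0}(L/K)$ are precisely the finite-level incarnations of the transition terms that arise when permitting ramification outside the cyclotomic tower; the transpose/dual structure of such a Ritter--Weiss module is what accounts for the $\vee$ on the left and the $\sharp$ implicit on the right.

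Second, I would prove (or invoke) the equivariant main conjecture, namely that $\Fitt_{\Z_p[[\mathcal{G}]]_-}(X_\infty)$ is generated by a $p$-adic $L$-function interpolating the Deligne--Ribet values, i.e.\ by an inverse limit of the Stickelberger-type elements $(\theta_{L_n/K, S_\infty}^{T_0})^{\sharp}$. Existence and integrality of this $p$-adic $L$-function is due to Cassou-Nogu\`es and Deligne--Ribet (already used in Proposition \ref{prop:int-Stickelberger}); the substantive input is the EMC itself. In this equivariant, non-abelian setting (after localisation at minus) this is established via Ribet's method of congruences between Eisenstein series and cusp forms on Hilbert modular varieties, combined with Kakde's group-ring congruence technique modulo commutators. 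The crucial innovation of \cite{Dasgupta-Kakde} over earlier approaches (\cite{MR2545262, MR2805422, MR3552493}) is that the auxiliary primes in $T_0$ and a refined choice of test vectors let one produce an equality of Fitting ideals unconditionally, rather than merely an equality of characteristic ideals modulo the vanishing of the Iwasawa $\mu$-invariant.

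Third, I would descend from the Iwasawa level to the finite level. The choice of $T_0$ (ensuring $E_{L,S_\infty}^{T_0}$ is torsion-free) guarantees that the relevant cohomology groups vanish and that the Ritter--Weiss module behaves well under the natural quotient $\Z_p[[\mathcal{G}]]_- \twoheadrightarrow \Z_p[G]_-$; one concludes using the base-change properties of Fitting ideals together with Lemma \ref{lem:Fitting-ideals}(ii) to reconcile $M$ with $M^{\vee}$. The principal obstacle, unsurprisingly, is the EMC step itself: constructing a Ritter--Weiss module whose Fitting ideal is \emph{exactly} (and not merely contained in) the $p$-adic $L$-function requires the full force of the Hilbert modular forms machinery of \cite{Dasgupta-Kakde}, and absent that input there seems to be no elementary route to the desired equality.
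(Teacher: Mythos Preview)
The paper does not prove this theorem at all: it is quoted as a black box from \cite{Dasgupta-Kakde} (their Theorem~3.5), and the present paper only \emph{uses} it as input to the proofs of Theorems~\ref{thm:main-theorem} and~\ref{thm:ETNC-holds}. So there is no argument in the paper to compare your proposal against.

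As a sketch of how Dasgupta and Kakde themselves proceed, your outline is partly right and partly misleading. You are correct that Ribet's method (Eisenstein congruences for group-ring-valued Hilbert modular forms) is the engine, and that a Ritter--Weiss type module is the object whose Fitting ideal one computes. But the three-step architecture you propose --- climb the cyclotomic $\Z_p$-tower, prove an equivariant main conjecture for $\Z_p[[\mathcal{G}]]_-$, then descend to the finite level --- is precisely the strategy of the earlier papers \cite{MR2545262, MR2805422, MR3552493}, and the descent step is exactly where the $\mu_p=0$ hypothesis enters. The point of \cite{Dasgupta-Kakde} is to bypass this: they work \emph{directly at finite level} with the Ritter--Weiss/Selmer module for $L/K$ and an inductive scheme on auxiliary field extensions, rather than going up the cyclotomic tower and back down. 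Your ``third step'' therefore mislocates the difficulty; the obstacle is not making descent work, it is producing enough congruences at finite level to pin down the Fitting ideal exactly, and that is where the refined choice of cusp forms and the auxiliary set $T_0$ come in.
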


\subsection{The strong Stark conjecture for totally odd characters} \label{subsec:strongStark}
For the proof of Theorem \ref{thm:main-theorem} it suffices to verify
Theorem \ref{thm:ETNC-holds} for abelian extensions (see Lemma 
\ref{lem:2-implies-1}). However, the proof of the latter result will build
upon a reformulation of the ETNC due to the author \cite{MR2805422} which is
rather involved. Since Theorem \ref{thm:main-theorem} can be obtained by 
more elementary means, we include a proof here for convenience.
The reader who is mainly interested in Theorem \ref{thm:ETNC-holds}
or already familiar with the results in \cite{MR2805422} 
may jump directly to the next subsection.

\begin{proof}[Proof of Theorem \ref{thm:main-theorem}]
	Let $K$ be a totally real number field and let $p$ be an odd prime.
	By Proposition \ref{prop:reduction-to-abelian} it suffices to
	prove the $p$-part of the strong Stark conjecture for
	totally odd characters of $G_K$ that factor through an
	abelian CM-extension $L/K$ of degree prime to $p$.
	We set $G := \Gal(L/K)$ and	observe that in this case
	every $\Z_p[G]$-module $M$ is cohomologically trivial.
	In particular, the map $N_G: M_G \rightarrow M^G$ is an isomorphism
	and taking $G$-(co)-invariants is an exact functor.
	
	Let $\chi$ be a totally odd irreducible complex character of $G$.
	Each choice of isomorphism $\iota: \C \simeq \C_p$ restricts to an embedding
	$\Q(\chi) \hookrightarrow \Q_p(\chi)$, which corresponds to a choice
	of prime in $\Q(\chi)$ above $p$. 
	We henceforth fix such an isomorphism.
	This allows us to view complex characters as $\C_p$-valued and vice versa.
	The covariant functor
	\[
		M \mapsto M^{\chi} := \Hom_{\Z_p(\chi)}(\Z_p(\chi), \Z_p(\chi) \otimes M)^G
	\]
	is exact on finitely generated $\Z[G]$-modules. 
	Moreover, if $f: A \rightarrow B$ is a homomorphism of
	finitely generated $\Z[G]$-modules with finite kernel and cokernel, then
	one has an equality
	\[
		q(f^{\chi}) = \Fitt_{\Z_p(\chi)}(\cok(f)^{\chi}) \cdot
			\Fitt_{\Z_p(\chi)}(\ker(f)^{\chi})^{-1}.
	\]
	In particular, for every finite set $S$ containing the archimedean places and
	every injective $\Z[G]$-homomorphism
	$\phi_S: X_{L,S} \rightarrow \mathcal{O}_{L,S}^{\times}$ we have that
	\begin{equation} \label{eqn:q-index-Fitt}
		\iota(q_{\phi_S}(\chi)) = \Fitt_{\Z_p(\chi)}(\cok(\phi_S)^{\chi}).
	\end{equation}
	Now assume that $S$ also contains all ramified primes and is large enough
	to generate the class group of $L$.
	We choose an arbitrary $\phi_{S_{\infty}}$ as above. 
	As $\cl_L$ is finite and $\Q[G]$ is semisimple,
	there is always an integer $N$ and a (necessarily injective)
	$\Z[G]$-homomorphism $\phi_S$
	such that the following diagram commutes
	\begin{equation} \label{eqn:phi-diagram}
	 \xymatrix{
	0 \ar[r] & X_{S_{\infty}} \ar[r] \ar[d]^{\phi_{S_{\infty}}} & X_S \ar[r] \ar[d]^{\phi_S}
			& \Z[S_f(L)] \ar[r] \ar[d]^{\cdot N} & 0 \ar[d] & \\
	0 \ar[r] & E_{S_{\infty}} \ar[r] & E_S \ar[r] & \Z[S_f(L)] \ar[r] & \cl_L \ar[r] & 0.
	} 
	\end{equation}
	Since $\chi$ is odd and $X_{S_{\infty}}^-$ vanishes, we obtain the following
	exact sequence of $\Z_p(\chi)$-modules
	\[
		0 \rightarrow \mu_L^{\chi} \rightarrow \cok(\phi_S)^{\chi} \rightarrow
		(\Z/N\Z[S_f(L)])^{\chi} \rightarrow \cl_L^{\chi} \rightarrow 0.
	\]
	Now we choose a second finite set $T_0$ of places of $K$ such that
	$S \cap T_0 = \emptyset$ and $E_{S_{\infty}}^{T_0}$ is torsion-free.
	We deduce from \eqref{eqn:ray-class-sequence} an exact sequence
	\[
		0 \rightarrow \mu_L^{\chi} \rightarrow 
		\left((\mathcal{O}_{L} / \mathfrak{M}_{T_0})^{\times}\right)^{\chi} \rightarrow
		(\cl_L^{T_0})^{\chi} \rightarrow \cl_L^{\chi} \rightarrow 0.
	\]
	The last two exact sequences and \eqref{eqn:q-index-Fitt} imply that
	\begin{equation} \label{eqn:q-index-formula}
		\iota(q_{\phi_S}(\chi)) = \Fitt_{\Z_p(\chi)}((\Z/N\Z[S_f(L)])^{\chi}) \cdot
		\Fitt_{\Z_p(\chi)}\left(\left((\mathcal{O}_{L} / \mathfrak{M}_{T_0})^{\times}\right)^{\chi}\right) \cdot \Fitt_{\Z_p(\chi)}((\cl_L^{T_0})^{\chi})^{-1}.
	\end{equation}
	We next compute the Fitting ideals on the right-hand side. 
	The first two are easily determined. We have
	\begin{eqnarray*}
		\Fitt_{\Z_p(\chi)}((\Z/N\Z[S_f(L)])^{\chi}) & = & ( N^{d_S(\chi)} ), \\
		\Fitt_{\Z_p(\chi)}\left(\left((\mathcal{O}_{L} / \mathfrak{M}_{T_0})^{\times}\right)^{\chi}\right) & = & 
		\left(\prod_{v \in T_0} (1 - \chi(\phi_w) N(v))\right),
	\end{eqnarray*}
	where $d_S(\chi)$ denotes the number of places $v$ in $S_f$ such that
	the restriction of $\chi$ to $G_v$ is trivial. The analytic class number 
	formula implies that (see \cite[Lemma 2.1]{Dasgupta-Kakde})
	\begin{equation} \label{eqn:CN-trick}
		|A_L^{T_0}(p)| = |\Z_p[G]_- / ( \theta_{L/K,S_{\infty}}^{T_0} ) |.
	\end{equation}
	Let $v$ be an arbitrary finite place of $K$.
	Since $\chi(\phi_w)$ is either zero or 
	a root of unity of order coprime to $p$,
	we see that $1 - \chi(\phi_w) \in \Z_p(\chi)^{\times}$ unless
	$G_v$ acts trivially on $V_{\chi}$. In that case we
	have $\chi(N_{I_v}) = |I_v| \in \Z_p^{\times}$
	 so that
	$(\theta_{L/K,S_{\infty}}^{T_0})^{\sharp}$ lies in 
	$\mathrm{SKu}^{T_0}(L/K)(p)$.
	Therefore \eqref{eqn:CN-trick}, Lemma \ref{lem:Fitting-ideals} and Theorem \ref{thm:Dasgupta-Kakde}
	imply that the Fitting ideal $\Fitt_{\Z_p[G]_-}(A_L^{T_0}(p))$
	is principal and generated by $\theta_{L/K,S_{\infty}}^{T_0}$.
	Since the cardinality of $G$ is prime to $p$, the ring $\Z_p[G]_-$
	decomposes into a product $\prod_{\chi(j) = -1} \Z_p(\chi)$.
	So the former statement is equivalent to
	\[
		\Fitt_{\Z_p(\chi)}((\cl_L^{T_0})^{\chi}) = 
		( L_{S_{\infty}}^{T_0}(0, \check \chi) )
	\]
	for all odd $\chi$. So we deduce from \eqref{eqn:q-index-formula}
	and the above that
	\begin{equation} \label{eqn:q-index-formula2}
		\iota(q_{\phi_S}(\chi)) = (N^{d_S(\chi)} / L_{S_{\infty}}(0,\check \chi)).
	\end{equation}
	Finally, we have the following commutative diagram
	whose rows are those of \eqref{eqn:phi-diagram} tensored with $\R$.
	\[ \xymatrix{
		0 \ar[r] & \R \otimes E_{S_{\infty}} \ar[r] \ar[d]^{\lambda_{S_{\infty}}} & 
		\R \otimes E_S \ar[r] \ar[d]^{\lambda_S}
		& \R[S_f(L)] \ar[r] \ar[d]^{\mathrm{Log}_S} & 0  \\
		0 \ar[r] & \R \otimes X_{S_{\infty}} \ar[r] &\R \otimes X_S \ar[r] & \R[S_f(L)] \ar[r] & 0.
	} 
	\]
	Here the first two vertical arrows are the negative Dirichlet maps 
	\eqref{eqn:Dirichlet-map} and
	$\mathrm{Log}_S$ maps $w \in S_f(L)$ to $-\log(N(w))$. For odd $\chi$ one has
	$R_{S_{\infty}}(\chi, \phi_{S_{\infty}}) = 1$ and hence
	\[
		R_S(\chi, \phi_S) = N^{d_S(\chi)}
		\cdot \prod_{v \in S_f \atop V_{\chi}^{G_v} = V_{\chi}} (-\log(N(w))).
	\]
	By \cite[Proposition 6, p.~50]{MR1386895} we likewise have that
	\[
		L_S^{\ast}(0,\chi) = L_{S_{\infty}}(0, \chi)
		\cdot \prod_{v \in S_f \atop V_{\chi}^{G_v} = V_{\chi}} \log(N(w))
		\cdot \prod_{v \in S_f \atop V_{\chi}^{G_v} = 0}
		(1 - \chi(\phi_w)).
	\]
	Since $d_S(\chi) = d_S({\check \chi})$ and $\iota(1 - \chi(\phi_w)) \in
	\Z_p(\chi)^{\times}$ whenever $V_{\chi}^{G_v}$ vanishes, 
	the last two displayed equalities and
	\eqref{eqn:q-index-formula2} complete the proof.
\end{proof}

\subsection{The relation to the equivariant Tamagawa number conjecture}
In order to relate the results of Dasgupta and Kakde 
to the $p$-minus-part of the ETNC, we use the following
reformulation due to the author. Roughly speaking,
its proof is an elaborate refinement of the argument given in
\S \ref{subsec:strongStark}.

\begin{theorem} \label{thm:ETNC-reformulation}
Let $L/K$ be an abelian CM-extension of number fields with Galois group $G$
and let $p$ be an odd prime. 
Assume that each $v \in S_p$ is at most tamely ramified or 
that we have $j \in G_v$.
Let $S_1$ be the set of places of $K$ 
comprising all archimedean places and all wildly ramified $p$-adic places.
Choose an unramified place $v_0$ of $K$ such that $E_{L,S_1}^{T_0}$ is torsion-free,
where we set $T_0 := \{v_0\}$. Moreover, we let $T$ be the union of $T_0$ and
the set of all non-$p$-adic ramified places of $K$. Then the following holds.
\begin{enumerate}
\item 
The attached Stickelberger element is $p$-integral, i.e.\,$\theta_{L/K,S_1}^T \in \Z_p[G]$;
\item 
the $G$-module $A_{L,S_{\infty}}^T(p)$ is cohomologically trivial;
\item
the following are equivalent:
\begin{enumerate}
\item 
the $p$-minus-part of the ETNC for the pair $(h^0(\Spec(L)), \Z[G])$ holds;
\item 
the Fitting ideal $\Fitt_{\Z_p[G]_-}(A_{L,S_{\infty}}^T(p))$ is generated by
$\theta_{L/K,S_1}^T$;
\item 
we have that $(\theta_{L/K,S_1}^T)^{\sharp} \in
\Fitt_{\Z_p[G]_-}(A_{L,S_{\infty}}^T(p)^{\vee})$.
\end{enumerate}
\end{enumerate}
\end{theorem}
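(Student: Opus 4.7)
The plan is to establish parts (i)--(iii) in turn, with (i) an immediate application of Proposition \ref{prop:int-Stickelberger}, (ii) a technical statement resting on local analysis at places above $p$, and (iii) following from (ii) together with the Fitting-ideal formalism of Lemma \ref{lem:Fitting-ideals} and the reformulation of the ETNC developed in \cite{MR2805422}. For (i), I would check the four hypotheses of Proposition \ref{prop:int-Stickelberger} for the pair $(S_1,T)$: conditions (i)--(iii) are immediate, since $S_1\cup T$ contains every wildly ramified $p$-adic place (from $S_1$) as well as every non-$p$-adic ramified place (from $T$), $S_1$ by definition contains all wildly ramified $p$-adic places, and all archimedean places lie in $S_1$. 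Condition (iv) reduces to the torsion-freeness of $E_{L,S_1}^{T_0}$ because, by construction, the set of unramified places in $T$ is exactly $T_0=\{v_0\}$, and this is precisely the hypothesis placed on $v_0$.

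For (ii), my strategy is to exploit the exact sequence \eqref{eqn:ray-ses} and reduce cohomological triviality of $A_{L,S_\infty}^T(p)$ to that of local factors. The hypothesis on $p$-adic places is decisive: for tamely ramified $v\in S_p$, a standard computation of Tate cohomology of local principal units yields cohomological triviality of the $(p)$-completion on the minus part; for $v\in S_p$ with $j\in G_v$, complex conjugation lies in the decomposition group, which after taking the minus part forces the contribution of $v$ to be supported on the plus part and thus to vanish. Combining these local observations with the fact that the non-$p$-adic ramified places, which lie in $T$, contribute only to the $p$-prime part of the ray class group, one deduces that $A_{L,S_\infty}^T(p)$ has finite projective dimension over $\Z_p[G]_-$, hence is cohomologically trivial.

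For (iii), the equivalence (b) $\Leftrightarrow$ (c) is short: by (ii) the Fitting ideal $\Fitt_{\Z_p[G]_-}(A_{L,S_\infty}^T(p))$ is principal (Lemma \ref{lem:Fitting-ideals}(i)) and its Pontryagin-dual counterpart is its $\sharp$-conjugate (Lemma \ref{lem:Fitting-ideals}(ii)), while the analytic class number formula in the form \eqref{eqn:CN-trick}, together with the observation that $\theta_{L/K,S_1}^T$ agrees with $\theta_{L/K,S_\infty}^{T_0}$ up to transparent $T$-modifications at non-$p$-adic ramified places, matches the index $[\Z_p[G]_-:\Fitt_{\Z_p[G]_-}(A_{L,S_\infty}^T(p))]$ with $|\Z_p[G]_-/(\theta_{L/K,S_1}^T)|$ via Lemma \ref{lem:Fitting-ideals}(iii). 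A containment of two principal ideals of the same finite index is an equality, giving (b) $\Leftrightarrow$ (c). For (a) $\Leftrightarrow$ (b) I would invoke the main results of \cite{MR2805422}: choose a Tate-type representative of the canonical class in $\Ext^2_{\Z[G]}(X_{L,S}, E_{L,S}^T)$, trivialize by the Dirichlet map to express $T\Omega(L/K,0)_p^-$ as a refined Euler characteristic in $K_0(\Z_p[G]_-,\Q_p)$, and use (ii) to translate its vanishing into the Fitting-ideal identity of (b).

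The principal obstacle is part (ii) in the wildly ramified case. The tame case is classical, but the wild case genuinely uses the hypothesis $j\in G_v$: here a careful analysis of the Tate cohomology of local unit groups at wildly ramified primes, combined with the semi-local action of complex conjugation, is required. This local-global interaction is the technical core on which the reformulation of \cite{MR2805422} ultimately rests, and is what prevents one from simply quoting an off-the-shelf cohomological triviality result.
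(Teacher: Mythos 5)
Your treatment of (i) and of part (iii) matches the paper's argument: (i) is exactly the verification of the hypotheses of Proposition \ref{prop:int-Stickelberger}, the equivalence (a)$\Leftrightarrow$(b) is (as in the paper) an appeal to the reformulation in \cite{MR2805422}, and your (b)$\Leftrightarrow$(c) argument --- convert (c) into the containment $(\theta_{L/K,S_1}^T)\subseteq\Fitt_{\Z_p[G]_-}(A_{L,S_{\infty}}^T(p))$ via Lemma \ref{lem:Fitting-ideals}(ii) and then force equality by comparing indices via Lemma \ref{lem:Fitting-ideals}(iii) --- is precisely the paper's. Be aware, though, that the index identity $|\Z_p[G]_-:(\theta_{L/K,S_1}^T)|=|A_{L,S_{\infty}}^T(p)|$ is not literally \eqref{eqn:CN-trick}: passing from $(S_{\infty},T_0)$ to $(S_1,T)$ changes both sides (Euler factors at wildly ramified $p$-adic places and $T$-modifications at the non-$p$-adic ramified places), and the paper quotes a separate statement, \cite[Proposition 4]{MR2805422}, for exactly this variant; calling the adjustment ``transparent'' hides a genuine, if routine, verification.

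The genuine gap is in part (ii). The paper does not prove cohomological triviality of $A_{L,S_{\infty}}^T(p)$ here at all: it is quoted as \cite[Theorem 1]{MR2805422}, a global theorem proved by class-field-theoretic/cohomological arguments. Your proposed route --- reduce cohomological triviality to ``local factors'' via the exact sequence \eqref{eqn:ray-ses} --- cannot work as stated, because that sequence contains the ordinary minus class group $A_{L,S_{\infty}}(p)$, which is \emph{not} cohomologically trivial in general; cohomological triviality is not inherited termwise from such a sequence, and a ray class group is not an assembly of local pieces. Worse, your supporting claim that the non-$p$-adic ramified places in $T$ ``contribute only to the $p$-prime part of the ray class group'' is false: for such a place one can have $N(v)\equiv 1 \pmod p$, so $(\mathcal{O}_L/\mathfrak{M}_T)^{\times}(p)^-$ is in general nontrivial. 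Indeed, the whole point of including all non-$p$-adic ramified places in $T$ is that these $p$-power contributions are exactly what repairs the failure of cohomological triviality of $A_{L,S_{\infty}}(p)$ caused by ramification at those places; discarding them destroys the statement. So for (ii) you must either cite \cite[Theorem 1]{MR2805422} (as the paper does) or reproduce its global argument, in which the hypothesis ``tame at $p$ or $j\in G_v$'' enters through the cohomology of (semi-)local units inside an idele-theoretic description of the ray class group, not through a naive local decomposition of $A_{L,S_{\infty}}^T(p)$ itself.
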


\begin{proof}
Part (i) is a special case of Proposition \ref{prop:int-Stickelberger}.
Part (ii) is \cite[Theorem 1]{MR2805422} and the equivalence of (a) and (b)
in part (iii) is a reformulation in terms of
Fitting ideals of \cite[Theorem 2]{MR2805422}. Note that (ii) implies that
the Fitting ideal of $A_{L,S_{\infty}}^T(p)$ is principal 
by Lemma \ref{lem:Fitting-ideals} (i).
Part (ii) of the same lemma shows that (b) implies (c). 
Now suppose that (c) holds. Then the inclusion
$(\theta_{L/K,S_1}^T) \subseteq \Fitt_{\Z_p[G]_-}(A_{L,S_{\infty}}^T(p))$ must 
be an equality, since we have that
\[
	|\Z_p[G]_- : (\theta_{L/K,S_1}^T)| = |A_{L,S_{\infty}}^T(p)|
	= |\Z_p[G]_- : \Fitt_{\Z_p[G]_-}(A_{L,S_{\infty}}^T(p))|.
\]
Here the first equality is \cite[Proposition 4]{MR2805422}
(a variant of \eqref{eqn:CN-trick} and likewise a consequence
of the analytic class number formula),
whereas the second follows from Lemma \ref{lem:Fitting-ideals} (iii).
\end{proof}

\section{Proof of the main results}
We have provided all the ingredients that we need for the proof
of our main results.
We first claim that it suffices to prove Theorem \ref{thm:ETNC-holds}
for abelian extensions.

By Lemma \ref{lem:2-implies-1}, this also finishes 
the proof of Theorem \ref{thm:main-theorem}. 
Moreover, Theorem \ref{thm:main-theorem}
implies Corollary \ref{cor:Brumer-Stark} by
\cite[Theorem 4.1, Proposition 3.9 and Lemma 2.12]{MR2976321}.

By the discussion in \S \ref{subsec:ETNC}
we then also know that $T\Omega(L/K,0)_p^-$ is torsion
for each Galois CM-extension $L/K$.
Hence it follows from \cite[Proposition 6.2]{MR3552493}
that $T\Omega(L/K,0)_p^-$ vanishes if and only if
$T\Omega(L'/K',0)_p^-$ vanishes for all intermediate Galois CM-extensions
$L'/K'$ whose Galois group is either $p$-elementary or a direct product
of a $p$-elementary group and a cyclic group of order $2$ (generated by $j$).
The full strength of Theorem \ref{thm:ETNC-holds} follows 
since the hypotheses on the extension $L/K$ are
inherited by each intermediate Galois CM-extension $L'/K'$
and a $p$-elementary group with abelian Sylow $p$-subgroup is itself abelian.
The main point here is that each subquotient of a finite group $G$ with abelian
Sylow $p$-subgroup $P$ also has an abelian Sylow $p$-subgroup.
To see this, let $U$ be a subgroup of $G$ and let $N$ be a normal subgroup of $U$.
Then by \cite[Aufgabe 26, p.~36]{MR0224703} there is a $g \in G$ such that
$Q := gPg^{-1} \cap U$ is a Sylow $p$-subgroup of $U$. Then $Q$ is abelian
and its image in the quotient $U/N$ is a Sylow $p$-subgroup of $U/N$.

Moreover, Theorem \ref{thm:ETNC-holds} and \cite[Theorem 5.3]{MR2976321}
directly imply the result on the non-abelian
Brumer and Brumer--Stark conjecture listed in Corollary
\ref{cor:consequences}. We have already observed that
the vanishing of $T\Omega(L/K,0)$, the `lifted root number
conjecture' of Gruenberg, Ritter and Weiss and the ETNC for the pair
$(h^0(\Spec(L)), \Z[G])$ all are equivalent.
Hence Theorem \ref{thm:ETNC-holds} implies 
(iii) and (iv) of Corollary \ref{cor:consequences}.
The final claims of Corollary \ref{cor:consequences} on
the conjecture of Breuning and Burns and on the ETNC for the pair
$(h^0(\Spec(L))(1), \Z[G])$ follow from a result of
Bley and Burns \cite[Corollary 6.3(i)]{MR2005875} as in the proof
of \cite[Corollary 1.6]{MR3552493}.

We are left with the following.

\begin{proof}[Proof of Theorem \ref{thm:ETNC-holds} for abelian extensions]
Let $L/K$ be an abelian CM-extension with Galois group $G$ and fix an odd prime $p$.
Suppose that for each $p$-adic place $v$ of $K$ we have that
the cardinality of $I_v$ is prime to $p$ or that $j \in G_v$.
Let $S_1$ be the set of places of $K$ 
comprising all archimedean places and all wildly ramified $p$-adic places.
Choose an unramified place $v_0 \not\in S_p$ such that $E_{L,S_1}^{T_0}$ is torsion-free,
where we set $T_0 := \{v_0\}$. Moreover, we let $T$ be the union of $T_0$ and
the set of all non-$p$-adic ramified places of $K$. By Theorem \ref{thm:ETNC-reformulation}
we have to show that $(\theta_{L/K,S_1}^T)^{\sharp}$ belongs to the Fitting ideal
of $A_{L,S_{\infty}}^T(p)^{\vee}$.

By Theorem \ref{thm:Dasgupta-Kakde} it suffices to check that
$(\theta_{L/K,S_1}^T)^{\sharp}$ lies in the Sinnott--Kurihara ideal
$\mathrm{SKu}^{T_0}(L/K)(p)$. We write
\[
	(\theta_{L/K,S_1}^T)^{\sharp} = (\theta_{L/K,S_{\infty}}^{T_0})^{\sharp} \cdot
	\prod_{v \in S_{\ram} \setminus S_p} (1 - \phi_w N(v) |I_v|^{-1} N_{I_v}) \cdot
	\prod_{v \in S_{\ram} \cap S_p} (1 - \phi_w |I_v|^{-1} N_{I_v}).
\]
If we compare this to \eqref{eqn:SK-ideal}, we see that we are left with showing
\begin{equation} \label{eqn:factor-at-v}
	1 - \phi_w N(v) |I_v|^{-1} N_{I_v} \in (N_{I_v}, 1 - \phi_w |I_v|^{-1} N_{I_v})
\end{equation}
for each place $v \nmid p$ that ramifies in $L/K$.
Let $\ell \not= p$ be the prime below $v$.
By local class field theory \cite[Chapter XV, \S 2]{MR554237}
the local units at $v$ surject under the reciprocity map onto $I_v$. The subgroup
of principal units is mapped onto the Sylow $\ell$-subgroup of $I_v$.
As the factor group of the local units modulo the principal units has
cardinality $N(v)-1$, the ramification index $|I_v|$ divides $N(v)-1$ up to
a $p$-adic unit. Hence \eqref{eqn:factor-at-v} follows from the equality
\[
	1 - \phi_w N(v) |I_v|^{-1} N_{I_v} = 
	1 - \phi_w |I_v|^{-1} N_{I_v} - \phi_w \frac{N(v)-1}{|I_v|} N_{I_v}.
\]

If we jump deeper into the results of Dasgupta and Kakde, we can
alternatively argue as follows.
Let $\mathrm{Sel}_{S_1}^T(L)$ be the Selmer module considered by Burns,
Kurihara and Sano in \cite{MR3522250}. We will not recall its definition
as we only need the following two facts. First there is an epimorphism
of $\Z_p[G]_-$-modules
\[
	\mathrm{Sel}_{S_1}^T(L)(p)^- \rightarrow A_{L, S_{\infty}}^T(p)^{\vee}
	\rightarrow 0
\]
by \cite[Lemma 3.1]{Dasgupta-Kakde}. Second we have that
\[
	\Fitt_{\Z_p[G]_-}(\mathrm{Sel}_{S_1}^T(L)(p)^-) = ((\theta_{L/K,S_1}^T)^{\sharp})
\]
by \cite[Theorem 3.3]{Dasgupta-Kakde}. Hence Lemma \ref{lem:Fitting-general}
implies that we have
\[
	(\theta_{L/K,S_1}^T)^{\sharp} \in \Fitt_{\Z_p[G]_-}(A_{L, S_{\infty}}^T(p)^{\vee}).
\]
as desired.
\end{proof}

\begin{remark}
One may ask whether it is possible to deduce
the $p$-minus part of the ETNC for further non-abelian 
CM-extensions from Theorem \ref{thm:ETNC-holds}
by considering `hybrid' cases as in \cite{MR3461042}.
We claim that this is not the case.
To see this, assume that the $p$-adic group ring $\Z_p[G]$
is `$N$-hybrid' for a normal subgroup $N$ of $G$ in the sense of 
\cite[Definition 2.5]{MR3461042}. Then $p$ does not divide 
the cardinality of $N$ by \cite[Proposition 2.8(i)]{MR3461042}.
Hence each Sylow $p$-subgroup of $G$ is mapped isomorphically
onto a Sylow $p$-subgroup of $G/N$ under the natural quotient map
$G \rightarrow G/N$. Thus if $G/N$ has an abelian Sylow $p$-subgroup
so does $G$.
\end{remark}

\bibliography{StrongStark-Bib}{}

\providecommand{\bysame}{\leavevmode\hbox to3em{\hrulefill}\thinspace}
\providecommand{\MR}{\relax\ifhmode\unskip\space\fi MR }
\providecommand{\MRhref}[2]{%
  \href{http://www.ams.org/mathscinet-getitem?mr=#1}{#2}
}
\providecommand{\href}[2]{#2}
\begin{thebibliography}{Nom14b}

\bibitem[BB03]{MR2005875}
W.~Bley and D.~Burns, \emph{Equivariant epsilon constants, discriminants and
  \'{e}tale cohomology}, Proc. London Math. Soc. (3) \textbf{87} (2003), no.~3,
  545--590. \MR{2005875}

\bibitem[BB07]{MR2371375}
M.~Breuning and D.~Burns, \emph{Leading terms of {A}rtin {$L$}-functions at
  {$s=0$} and {$s=1$}}, Compos. Math. \textbf{143} (2007), no.~6, 1427--1464.
  \MR{2371375}

\bibitem[BF01]{MR1884523}
D.~Burns and M.~Flach, \emph{Tamagawa numbers for motives with
  (non-commutative) coefficients}, Doc. Math. \textbf{6} (2001), 501--570
  (electronic). \MR{1884523 (2002m:11055)}

\bibitem[BF03]{MR1981031}
\bysame, \emph{Tamagawa numbers for motives with (noncommutative) coefficients.
  {II}}, Amer. J. Math. \textbf{125} (2003), no.~3, 475--512. \MR{1981031
  (2004c:11111)}

\bibitem[BG03a]{MR2046598}
D.~Burns and C.~Greither, \emph{Equivariant {W}eierstrass preparation and
  values of {$L$}-functions at negative integers}, Doc. Math. (2003), no.~Extra
  Vol., 157--185, Kazuya Kato's fiftieth birthday. \MR{2046598}

\bibitem[BG03b]{MR1992015}
\bysame, \emph{On the equivariant {T}amagawa number conjecture for {T}ate
  motives}, Invent. Math. \textbf{153} (2003), no.~2, 303--359. \MR{1992015}

\bibitem[BJ11]{MR2771125}
D.~Burns and H.~Johnston, \emph{A non-abelian {S}tickelberger theorem}, Compos.
  Math. \textbf{147} (2011), no.~1, 35--55. \MR{2771125}

\bibitem[BKS16]{MR3522250}
D.~Burns, M.~Kurihara, and T.~Sano, \emph{On zeta elements for {$\Bbb G_m$}},
  Doc. Math. \textbf{21} (2016), 555--626. \MR{3522250}

\bibitem[Bur01]{MR1863302}
D.~Burns, \emph{Equivariant {T}amagawa numbers and {G}alois module theory.
  {I}}, Compositio Math. \textbf{129} (2001), no.~2, 203--237. \MR{1863302}

\bibitem[Bur20]{MR4092926}
\bysame, \emph{On derivatives of {$p$}-adic {$L$}-series at {$s$} = 0}, J.
  Reine Angew. Math. \textbf{762} (2020), 53--104. \MR{4092926}

\bibitem[Chi83]{MR724009}
T.~Chinburg, \emph{On the {G}alois structure of algebraic integers and
  {$S$}-units}, Invent. Math. \textbf{74} (1983), no.~3, 321--349. \MR{724009
  (86c:11096)}

\bibitem[CN79]{MR524276}
P.~Cassou-Nogu\`es, \emph{Valeurs aux entiers n\'{e}gatifs des fonctions
  z\^{e}ta et fonctions z\^{e}ta {$p$}-adiques}, Invent. Math. \textbf{51}
  (1979), no.~1, 29--59. \MR{524276}

\bibitem[DK20]{Dasgupta-Kakde}
S.~Dasgupta and M.~Kakde, \emph{On the {B}rumer-{S}tark conjecture}, preprint,
  arXiv:2010.00657, 2020.

\bibitem[DR80]{MR579702}
P.~Deligne and K.~Ribet, \emph{Values of abelian {$L$}-functions at negative
  integers over totally real fields}, Invent. Math. \textbf{59} (1980), no.~3,
  227--286. \MR{579702}

\bibitem[Fla11]{MR2863902}
M.~Flach, \emph{On the cyclotomic main conjecture for the prime 2}, J. Reine
  Angew. Math. \textbf{661} (2011), 1--36. \MR{2863902}

\bibitem[FW79]{MR528968}
B.~Ferrero and L.~C. Washington, \emph{The {I}wasawa invariant {$\mu _{p}$}
  vanishes for abelian number fields}, Ann. of Math. (2) \textbf{109} (1979),
  no.~2, 377--395. \MR{528968}

\bibitem[Gre00]{MR1750935}
C.~Greither, \emph{Some cases of {B}rumer's conjecture for abelian {CM}
  extensions of totally real fields}, Math. Z. \textbf{233} (2000), no.~3,
  515--534. \MR{1750935}

\bibitem[Gre07]{MR2371374}
\bysame, \emph{Determining {F}itting ideals of minus class groups via the
  equivariant {T}amagawa number conjecture}, Compos. Math. \textbf{143} (2007),
  no.~6, 1399--1426. \MR{2371374}

\bibitem[Gro81]{MR656068}
B.~H. Gross, \emph{{$p$}-adic {$L$}-series at {$s=0$}}, J. Fac. Sci. Univ.
  Tokyo Sect. IA Math. \textbf{28} (1981), no.~3, 979--994 (1982). \MR{656068}

\bibitem[GRW99]{MR1687551}
K.~W. Gruenberg, J.~Ritter, and A.~Weiss, \emph{A local approach to
  {C}hinburg's root number conjecture}, Proc. London Math. Soc. (3) \textbf{79}
  (1999), no.~1, 47--80. \MR{1687551}

\bibitem[Hup67]{MR0224703}
B.~Huppert, \emph{Endliche {G}ruppen. {I}}, Die Grundlehren der Mathematischen
  Wissenschaften, Band 134, Springer-Verlag, Berlin-New York, 1967.
  \MR{0224703}

\bibitem[JN16]{MR3461042}
H.~Johnston and A.~Nickel, \emph{On the equivariant {T}amagawa number
  conjecture for {T}ate motives and unconditional annihilation results}, Trans.
  Amer. Math. Soc. \textbf{368} (2016), no.~9, 6539--6574. \MR{3461042}

\bibitem[Nic09]{MR2545262}
A.~Nickel, \emph{The lifted root number conjecture for small sets of places},
  J. Lond. Math. Soc. (2) \textbf{80} (2009), no.~2, 446--470. \MR{2545262}

\bibitem[Nic11a]{MR2976321}
\bysame, \emph{On non-abelian {S}tark-type conjectures}, Ann. Inst. Fourier
  (Grenoble) \textbf{61} (2011), no.~6, 2577--2608 (2012). \MR{2976321}

\bibitem[Nic11b]{MR2805422}
\bysame, \emph{On the equivariant {T}amagawa number conjecture in tame
  {CM}-extensions}, Math. Z. \textbf{268} (2011), no.~1-2, 1--35. \MR{2805422}

\bibitem[Nic16]{MR3552493}
\bysame, \emph{Integrality of {S}tickelberger elements and the equivariant
  {T}amagawa number conjecture}, J. Reine Angew. Math. \textbf{719} (2016),
  101--132. \MR{3552493}

\bibitem[Nom14a]{MR3408193}
J.~Nomura, \emph{The 2-part of the non-abelian {B}rumer-{S}tark conjecture for
  extensions with group {$D_{4p}$} and numerical examples of the conjecture},
  Algebraic number theory and related topics 2012, RIMS K\^{o}ky\^{u}roku
  Bessatsu, B51, Res. Inst. Math. Sci. (RIMS), Kyoto, 2014, pp.~33--53.
  \MR{3408193}

\bibitem[Nom14b]{MR3208861}
\bysame, \emph{On non-abelian {B}rumer and {B}rumer-{S}tark conjecture for
  monomial {CM}-extensions}, Int. J. Number Theory \textbf{10} (2014), no.~4,
  817--848. \MR{3208861}

\bibitem[Nor76]{MR0460383}
D.~G. Northcott, \emph{Finite free resolutions}, Cambridge University Press,
  Cambridge-New York-Melbourne, 1976, Cambridge Tracts in Mathematics, No. 71.
  \MR{0460383}

\bibitem[NSW08]{MR2392026}
J.~Neukirch, A.~Schmidt, and K.~Wingberg, \emph{Cohomology of number fields},
  second ed., Grundlehren der Mathematischen Wissenschaften [Fundamental
  Principles of Mathematical Sciences], vol. 323, Springer-Verlag, Berlin,
  2008. \MR{2392026}

\bibitem[RW97]{MR1423032}
J.~Ritter and A.~Weiss, \emph{Cohomology of units and {$L$}-values at zero}, J.
  Amer. Math. Soc. \textbf{10} (1997), no.~3, 513--552. \MR{1423032}

\bibitem[Ser77]{MR0450380}
J.-P. Serre, \emph{Linear representations of finite groups}, Springer-Verlag,
  New York-Heidelberg, 1977, Translated from the second French edition by
  Leonard L. Scott, Graduate Texts in Mathematics, Vol. 42. \MR{0450380}

\bibitem[Ser79]{MR554237}
\bysame, \emph{Local fields}, Graduate Texts in Mathematics, vol.~67,
  Springer-Verlag, New York-Berlin, 1979, Translated from the French by Marvin
  Jay Greenberg. \MR{554237}

\bibitem[Sie70]{MR0285488}
C.~L. Siegel, \emph{\"{U}ber die {F}ourierschen {K}oeffizienten von
  {M}odulformen}, Nachr. Akad. Wiss. G\"{o}ttingen Math.-Phys. Kl. II
  \textbf{1970} (1970), 15--56. \MR{0285488}

\bibitem[Tat84]{MR782485}
J.~Tate, \emph{Les conjectures de {S}tark sur les fonctions {$L$} d'{A}rtin en
  {$s=0$}}, Progress in Mathematics, vol.~47, Birkh\"auser Boston Inc., Boston,
  MA, 1984, Lecture notes edited by Dominique Bernardi and Norbert Schappacher.
  \MR{782485 (86e:11112)}

\bibitem[Wei96]{MR1386895}
A.~Weiss, \emph{Multiplicative {G}alois module structure}, Fields Institute
  Monographs, vol.~5, American Mathematical Society, Providence, RI, 1996.
  \MR{1386895}

\end{thebibliography}
\bibliographystyle{amsalpha}

\end{document}